\newtheorem{thm}{Theorem}
\newtheorem{pro}[thm]{Proposition}
\newtheorem{cor}[thm]{Corollary}
\theoremstyle{definition}
\newtheorem{rmk}{Remark}
\newtheorem{defi}[rmk]{Definition}
\newtheorem{exa}[rmk]{Example}
\begin{document}

\title{Set of evolution operators of an evolution algebra}

\author{D.~Fern\'andez-Ternero \and V.M.~G\'omez-Sousa \and J.~N\'u\~nez-Vald\'es}

\thanks{The first and the second authors were partially supported by MICINN Spain Research Project
PID2020-118753GB-I00. All authors were partially suported by Junta de Andalucía Research Group FQM–326.}

\date{\today}

\begin{abstract}
An automorphism defined on an evolution algebra can provide both a finite number and an infinite number of evolution operators on it. This question is dealt with in the paper, as well as others more related to the evolution operators of evolution algebras. After defining the concept of the set of evolution operators of an evolution algebra, differences between trivial and non-trivial sets of evolution operators are also covered. Some properties of these concepts are studied and several examples of the above issues are shown.
\end{abstract}

\keywords{Evolution algebra, evolution operator, automorphism}

\subjclass[2010]{47C05, 17D92, 17A36, 17A20}

\maketitle


\section{Introduction}
Evolution algebras were firstly introduced in the Ph.D. Thesis of Jianjun Paul Tian in 2004 (\cite{phdthesis}). These algebras are non-associative and commutative, but unlike many other non-associative algebras such as Lie, Malcev or Jordan algebras, evolution algebras are not described by a series of identities. An \textit{evolution algebra} $E\equiv (E,+,\cdot)$ over a field $\mathbb{K}$ is a $\mathbb{K}$-algebra such that there exists a \textit{natural basis}, that is, a basis $\mathcal{B}=\{e_i:i=1,\dots,n\}$, such that $e_i\cdot e_j=0$, for all $i\neq j$. The scalars $a_{ij}$ such that $e_j^2=e_j\cdot e_j=\sum_{i=1}^na_{ij}e_i$ will be called the \textit{structure constants}, and the matrix $A=(a_{ij})$ is said to be the \textit{structure matrix}. Furthermore, the evolution algebra is called {\it non-degenerate} if $e_j^2\neq0$, for all $j$. Otherwise, the algebra is called {\it degenerate}. Therefore, all the dynamic information of an evolution algebra is collected in this matrix, from which the operator that is the object of study in this paper is defined.

Fixed a natural basis, the linear map $L:E\rightarrow E$ which maps each generator into its square, $L(e_j)=e_j^2=\sum_{i=1}^na_{ij}e_i$, is called the \textit{evolution operator} associated with $\mathcal{B}$. It is obvious that $M_\mathcal{B}(L)=A$, that is, the matrix representation of this linear transformation is the structure matrix.

Topics related to the evolution operator of evolution algebras are scant in the literature to date, being \cite{dvj1,dvj2,dvj3,ip,rv} some of the several papers that deal with this subject. In this work we delve into different aspects related to this operator in order to solve this shortage and provide new properties that make easier the study of this field.

Since the evolution operator depends on the chosen natural basis, it is not unique. Therefore, the following questions naturally arise
\begin{itemize}
    \item Do evolution operators share any properties?
    \item When do two evolution operators represent the same linear map?
    \item When writing all evolution operators in the same basis, how are they related?
    \item Is there any way to relate evolution operators and automorphisms?
\end{itemize}

The aim of this paper is to answer these questions in a general way. To do so, in Section 2 we briefly introduce the main definitions and properties that will be used, to later study, in Section 3, the connection between automorphisms and evolution operators. Finally, in Section 4 we study the structure of the set of all evolution operators in order to find connections between these linear maps.

\section{Preliminaries}

Below we define some concepts that will be used repeatedly throughout the paper.

\begin{defi}
Let $A=(a_{ij})$ and $B=(b_{ij})$ be two $n\times m$ matrices. We define the following matrix operations
\begin{itemize}
    \item The \textup{Hadamard product} $A\odot B=(a_{ij}b_{ij})$.
    \item The \textup{Hadamard power} $A^{(n)}=\underbrace{A\odot\dots\odot A}_{n\text{ times}}$.
    \item $A*B=(c_{ij}^k)$ a $n\times\frac{m(m-1)}{2}$ matrix whose  columns are indexed by pairs $(i,j)$ with $i<j$, considering the lexicographical order. The column $(i,j)$ is $A_{*i}\odot B_{*j}$, that is, the Hadamard product of the $i$th column of $A$ and the $j$th column of $B$.
\end{itemize}
\end{defi}

The following result, whose proof can be found at \cite{csv}, allows us to relate the product of the algebra to the Hadamard product and the evolution operator.

\begin{pro}
\label{prop}
Let $E$ be an evolution algebra with natural basis $\mathcal{B}$ and structure matrix $A$. Then, the following diagram is commutative
\[\begin{tikzcd}
E\times E \arrow{r}{\odot} \arrow[swap]{dr}{\cdot} & E \arrow{d}{L} \\
 & E
\end{tikzcd}\]
where $\odot:E\times E\rightarrow E$ is defined as
\[x\odot y=\left(\sum_{j=1}^nx_je_j\right)\odot\left(\sum_{j=1}^ny_je_j\right)=\sum_{j=1}^nx_jy_j\,e_j.\]
Equivalently, in matrix form, we get
\[\left(x\cdot y\right)_\mathcal{B}=A\left(x_\mathcal{B}\odot y_\mathcal{B}\right).\]
\end{pro}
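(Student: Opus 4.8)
The plan is to verify the identity directly in coordinates, using only the bilinearity of the algebra product and the defining property of a natural basis. First I would expand $x$ and $y$ in the natural basis $\mathcal{B}$, say $x=\sum_{j=1}^n x_j e_j$ and $y=\sum_{k=1}^n y_k e_k$, and use bilinearity of the product to write $x\cdot y=\sum_{j,k} x_j y_k\,(e_j\cdot e_k)$. The key step is then to invoke the natural-basis condition $e_j\cdot e_k=0$ for $j\neq k$, which collapses the double sum to the diagonal: $x\cdot y=\sum_{j=1}^n x_j y_j\,e_j^2=\sum_{j=1}^n x_j y_j\,L(e_j)$.

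On the other hand, by the definition of $\odot$ we have $x\odot y=\sum_{j=1}^n x_j y_j\,e_j$, and applying the linear map $L$ gives $L(x\odot y)=\sum_{j=1}^n x_j y_j\,L(e_j)$. Comparing the two expressions yields $x\cdot y=L(x\odot y)$, which is exactly the commutativity of the triangle. For the matrix form I would substitute $L(e_j)=\sum_{i=1}^n a_{ij}e_i$ into this common value, so that the $i$th coordinate of $x\cdot y$ equals $\sum_{j=1}^n a_{ij}\,x_j y_j$; since $(x_\mathcal{B}\odot y_\mathcal{B})_j=x_j y_j$, this is precisely the $i$th entry of $A\,(x_\mathcal{B}\odot y_\mathcal{B})$, giving $\left(x\cdot y\right)_\mathcal{B}=A\left(x_\mathcal{B}\odot y_\mathcal{B}\right)$.

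There is no genuine obstacle here: the whole content of the statement is that a natural basis diagonalizes the multiplication table, so that the product factors as the coordinate-wise Hadamard product followed by $L$. The only point requiring care is bookkeeping with the summation indices — in particular, making sure the Hadamard product is taken with respect to the same basis $\mathcal{B}$ in which the structure matrix $A$ is written — but this is routine rather than a real difficulty.
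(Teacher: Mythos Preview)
Your argument is correct: expanding in the natural basis, using bilinearity together with $e_j\cdot e_k=0$ for $j\neq k$, and then recognising the diagonal sum as $L$ applied to $x\odot y$ is exactly the content of the proposition, and your derivation of the matrix form is likewise fine.

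As for comparison with the paper: the paper does not actually supply a proof of this proposition. It states the result and refers the reader to \cite{csv} for the proof. Your direct coordinate computation is the standard and essentially only reasonable argument here, so there is nothing substantive to contrast; you have simply filled in what the paper omits.
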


Recall that, given two algebras $A$ and $A'$, a linear map $f:A\rightarrow A'$ is said to be an \textit{homomorphism of algebras} (\textit{homomorphism} for short) if $f(x\cdot y)=f(x)\cdot f(y)$, for every $x,y\in A$. A homomorphism from an algebra to itself is an \textit{endomorphism} and an \textit{automorphism} is a biyective endomorphism.

\vspace{0.15cm}

The following characterization of endomorphisms is well know. We take advantage to give a shorter alternative proof to the one shown in \cite{phdthesis}.

\begin{pro}
Let $E$ be an evolution algebra with natural basis $\mathcal{B}$ and structure matrix $A$. Then, a linear map $g:E\rightarrow E$ is an homomorphism if and only if $A(G*G)=0$ and $AG^{(2)}=GA$, where $G=M_\mathcal{B}(g)$.
\end{pro}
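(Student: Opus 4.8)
The plan is to reduce the homomorphism condition to its action on the natural basis and then translate each resulting identity into matrix form by means of Proposition~\ref{prop}. First, since $g$ is linear and the product of $E$ is bilinear, the condition $g(x\cdot y)=g(x)\cdot g(y)$ for all $x,y\in E$ is equivalent to the finitely many conditions $g(e_i\cdot e_j)=g(e_i)\cdot g(e_j)$ for $1\le i,j\le n$. Using that $\mathcal{B}$ is a natural basis, these split into the off-diagonal case $i\neq j$, where $e_i\cdot e_j=0$, and the diagonal case $i=j$, where $e_j\cdot e_j=e_j^2=\sum_{k=1}^na_{kj}e_k$.

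Next I would write $G=M_\mathcal{B}(g)=(g_{ij})$, so that $g(e_j)$ has coordinate column $G_{*j}$, the $j$th column of $G$. By Proposition~\ref{prop}, $(g(e_i)\cdot g(e_j))_\mathcal{B}=A(G_{*i}\odot G_{*j})$. For $i\neq j$ the condition thus becomes $A(G_{*i}\odot G_{*j})=0$; by commutativity of the Hadamard product it suffices to require this for $i<j$, and since the column of $G*G$ indexed by the pair $(i,j)$ is exactly $G_{*i}\odot G_{*j}$, the whole family of off-diagonal conditions is precisely $A(G*G)=0$. For the diagonal case, $g(e_j^2)=\sum_{k}a_{kj}g(e_k)$ has coordinate column $GA_{*j}$, where $A_{*j}$ denotes the $j$th column of $A$, while $(g(e_j)\cdot g(e_j))_\mathcal{B}=A(G_{*j}\odot G_{*j})=(AG^{(2)})_{*j}$, since $G_{*j}\odot G_{*j}$ has entries $g_{ij}^2$ and is therefore the $j$th column of $G^{(2)}$. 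Hence the diagonal conditions, ranging over all $j$, are equivalent to $GA=AG^{(2)}$. Combining the two cases yields the stated characterization.

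There is no serious obstacle here; the argument is essentially bookkeeping. The point requiring a little care is the observation that the off-diagonal conditions, a priori indexed by all ordered pairs $(i,j)$ with $i\neq j$, collapse to the unordered pairs $i<j$ precisely because $\odot$ is commutative, which is what makes $G*G$ (with its $\tfrac{n(n-1)}{2}$ columns) the right object. Beyond that, one only needs to keep straight the identification of the relevant columns of $G*G$, $G^{(2)}$, $GA$ and $AG^{(2)}$, all of which follow directly from the definitions in Section~2 and from Proposition~\ref{prop}.
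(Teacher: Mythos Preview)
Your proof is correct and follows essentially the same approach as the paper's: reduce the homomorphism condition to the basis, split into the off-diagonal case (yielding $A(G*G)=0$) and the diagonal case (yielding $AG^{(2)}=GA$), and use Proposition~\ref{prop} to translate each into matrix form. Your additional remark that commutativity of $\odot$ is what collapses the $i\neq j$ conditions to the $i<j$ columns of $G*G$ is a helpful clarification the paper leaves implicit.
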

\begin{proof}
 A linear map $g$ is an homomorphism if and only if $g(e_i)\cdot g(e_j)=g(e_i\cdot e_j)=g(0)=0$ and $g(e_j)^2=g(e_j^2)$. Taking coordinates with respect to $\mathcal{B}$ and making use of Proposition \ref{prop}, the first equality is
\[0=\left(g(e_i)\cdot g(e_j)\right)_\mathcal{B}=A\left(g(e_i)_\mathcal{B}\odot g(e_j)_\mathcal{B}\right)=A\left(G_{*i}\odot G_{*j}\right),\]
so it is equivalent to $A(G*G)=0$. Since
\[\left(g(e_j)^2\right)_\mathcal{B}=Ag(e_j)_\mathcal{B}^{(2)}=AG_{*j}^{(2)},\]
\[\left(g(e_j^2)\right)_\mathcal{B}=G\left(e_j^2\right)_\mathcal{B}=GA_{*j},\]
the second equality is equivalent to $AG^{(2)}=GA$.   
\end{proof}

We end these preliminaries with the following results given in \cite{bcs}.

\begin{thm}
\label{teobcs}
Let $E$ be an evolution algebra with natural basis $\mathcal{B}=\{e_i: i=1,\dots,n\}$ and structure matrix $A=(a_{ij})$. Let $\mathcal{B}'=\{\eta_j=\sum_{i=1}^n g_{ij} e_i:j=1,\dots,n\}$ be another basis of $E$, with $supp(\eta_j)=\{i: g_{ij}\neq0\}=\{i_1,\dots,i_r\}$.
\begin{itemize}
    \item If $\eta_j^2=0$, then $e_{i_1}^2=\dots=e_{i_r}^2=0$.
    \item If $\eta_j^2\neq0$, then $rank(\{e_{i_1}^2,\dots,e_{i_r}^2\})=1$.
\end{itemize}
\end{thm}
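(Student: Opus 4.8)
The plan is to fix an index $j$ and study the \emph{multiplication operator} by $\eta_j$, that is, the linear map $\varphi\colon E\to E$, $\varphi(x)=\eta_j\cdot x$; both bullets will follow from a rank count for $\varphi$. First I would compute $\varphi$ on the natural basis $\mathcal{B}$: writing $\eta_j=\sum_i g_{ij}e_i$ and using that $e_k\cdot e_l=0$ whenever $k\neq l$, one immediately obtains $\varphi(e_k)=g_{kj}\,e_k^2$ for every $k$ (equivalently, by Proposition \ref{prop}, the matrix of $\varphi$ in the basis $\mathcal{B}$ is $A\,D_j$, where $D_j=\mathrm{diag}(g_{1j},\dots,g_{nj})$). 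Hence $\mathrm{Im}(\varphi)$ is the subspace spanned by the vectors $g_{kj}\,e_k^2$, $k=1,\dots,n$, and since $g_{kj}=0$ for $k\notin supp(\eta_j)$ while $g_{i_sj}\neq 0$ for $s=1,\dots,r$, this subspace coincides with the one spanned by $e_{i_1}^2,\dots,e_{i_r}^2$. Therefore
\[\dim\mathrm{Im}(\varphi)=rank(\{e_{i_1}^2,\dots,e_{i_r}^2\}).\]

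Next I would bound the right-hand side. Since $\mathcal{B}'$ is a natural basis, $\eta_j\cdot\eta_k=0$ for every $k\neq j$, so $\varphi(\eta_k)=0$ and $\ker\varphi$ contains the span of $\{\eta_k:k\neq j\}$, a subspace of dimension $n-1$ because these vectors form part of a basis; hence $\dim\mathrm{Im}(\varphi)\le 1$. I then split according to the value of $\varphi(\eta_j)=\eta_j^2$. If $\eta_j^2=0$, then $\eta_j\in\ker\varphi$ as well, so $\ker\varphi$ contains the whole basis $\mathcal{B}'$, i.e.\ $\varphi=0$; by the displayed identity this forces $rank(\{e_{i_1}^2,\dots,e_{i_r}^2\})=0$, that is, $e_{i_1}^2=\dots=e_{i_r}^2=0$. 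If $\eta_j^2\neq 0$, then $\mathrm{Im}(\varphi)\neq 0$, so $\dim\mathrm{Im}(\varphi)\ge 1$, which together with the bound above yields $rank(\{e_{i_1}^2,\dots,e_{i_r}^2\})=1$.

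I expect the only genuine difficulty to be spotting the right linear map: starting instead from the bare identity $\eta_j^2=\sum_{s=1}^r g_{i_sj}^2\,e_{i_s}^2$ leads nowhere, since the vanishing (or non-vanishing) of this single linear combination says nothing about the individual squares $e_{i_s}^2$. What carries the argument is that naturality of $\mathcal{B}'$ supplies the further relations $\eta_j\cdot\eta_k=0$ for $k\neq j$; bundling all of them, together with $\eta_j\cdot\eta_j=\eta_j^2$, into the single operator $\varphi$ reduces the statement to the trivial observation that $\dim\mathrm{Im}(\varphi)\in\{0,1\}$, the two cases being distinguished precisely by whether $\eta_j^2$ vanishes.
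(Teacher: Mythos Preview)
The paper does not prove Theorem~\ref{teobcs}; it is quoted from \cite{bcs}, so there is no in-paper argument to compare against. Your proof is correct, and it is essentially the argument given in \cite{bcs}: the crucial input there, as in your approach, is that $\eta_j\cdot\eta_k=0$ for all $k\neq j$ because $\mathcal{B}'$ is a \emph{natural} basis, which produces enough linear relations among the $e_{i_s}^2$ to force their rank to be at most one. Framing this as a rank computation for the multiplication-by-$\eta_j$ operator is a tidy way to package the same idea.

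One remark worth recording: the statement as printed in the paper says only ``another basis'', but the result is false without the word \emph{natural} (take $\eta_1=e_1+e_2$ in an algebra with $e_1^2,e_2^2$ linearly independent; then $\eta_1^2\neq 0$ yet $rank(\{e_1^2,e_2^2\})=2$). You correctly supplied this hypothesis, and it is indeed the assumption in \cite{bcs} and the one used every time the theorem is invoked later in the paper.
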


\begin{defi}
Let $E$ be an evolution algebra with natural basis $\mathcal{B}$.
\begin{itemize}
    \item We say that $E$ {\it has a unique natural basis} if any other natural basis of $E$ is equal to $B$ up to rearrangement and product by nonzero scalars.
    \item We say that $E$ has {\it Property (2LI)} if for any two different vectors $e_i,e_j$ of $\mathcal{B}$, $\{e_i^2,e_j^2\}$ is linearly independent.
\end{itemize}
\end{defi}

\begin{cor}
\label{corbcs}
Let $E$ be a non-degenerate evolution algebra. Then the following assertions are equivalent
\begin{enumerate}
    \item $E$ has a unique natural basis.
    \item $E$ has Property (2LI).
\end{enumerate}
Degenerate evolution algebras do not satisfy any of these assertions.
\end{cor}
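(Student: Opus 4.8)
The plan is to read both implications off Theorem~\ref{teobcs}, using it directly for the implication $(2)\Rightarrow(1)$ and via an explicit change of basis for its converse. For $(2)\Rightarrow(1)$, I would fix an arbitrary natural basis $\mathcal{B}'=\{\eta_j=\sum_i g_{ij}e_i\}$ and set $S_j=\{i: g_{ij}\neq 0\}$. First I would rule out $\eta_j^2=0$: by the first item of Theorem~\ref{teobcs} this would force $e_i^2=0$ for every $i\in S_j$, which is impossible since $S_j\neq\emptyset$ (because $\eta_j\neq 0$) and $E$ is non-degenerate. Hence the second item of Theorem~\ref{teobcs} applies, so the family $\{e_i^2:i\in S_j\}$ has rank $1$; in particular any two of its members form a linearly dependent pair. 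If some $|S_j|\geq 2$, this would produce two distinct vectors $e_i,e_k\in\mathcal{B}$ with $\{e_i^2,e_k^2\}$ linearly dependent, contradicting Property~(2LI). Therefore $|S_j|=1$ for every $j$, i.e.\ each $\eta_j$ is a nonzero scalar multiple of some $e_{\sigma(j)}$; since $\mathcal{B}'$ is a basis, $\sigma$ is a permutation, so $\mathcal{B}'$ coincides with $\mathcal{B}$ up to rearrangement and rescaling, which is $(1)$.

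For $(1)\Rightarrow(2)$ I would argue by contraposition. If Property~(2LI) fails there are $i\neq k$ with $\{e_i^2,e_k^2\}$ linearly dependent, and non-degeneracy forces $e_k^2=\lambda e_i^2$ with $\lambda\neq 0$. Now build $\mathcal{B}'$ from $\mathcal{B}$ by keeping every $e_\ell$ with $\ell\notin\{i,k\}$ and replacing $e_i,e_k$ by $\eta_1=ae_i+be_k$, $\eta_2=ce_i+de_k$. By Proposition~\ref{prop} one has $\eta_1\cdot\eta_2=(ac+\lambda bd)\,e_i^2$, while all the products $\eta_1\cdot e_\ell$, $\eta_2\cdot e_\ell$ and $e_\ell\cdot e_m$ (for $\ell\neq m$) vanish automatically; so it suffices to pick scalars with $ad-bc\neq 0$ (so $\mathcal{B}'$ is a basis), $ac+\lambda bd=0$ (so $\mathcal{B}'$ is natural) and $ab\neq 0$ (so $\eta_1$ is not a monomial). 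Such a choice exists provided the base field has enough elements: for instance $a=b=d=1$, $c=-\lambda$ works whenever $\lambda\neq -1$, and a mild change of normalization covers the remaining case. The resulting $\mathcal{B}'$ is then a natural basis whose vector $\eta_1$ has support of size $2$, hence it is not $\mathcal{B}$ up to rearrangement and rescaling, and $(1)$ fails. Finally, for a degenerate $E$ of dimension at least $2$, if $e_j^2=0$ then $\{e_j^2,e_\ell^2\}$ is linearly dependent for every $\ell\neq j$, so $(2)$ fails, and $(\mathcal{B}\setminus\{e_\ell\})\cup\{e_j+e_\ell\}$ is again a natural basis (each needed product vanishes because $e_j^2=0$), so $(1)$ fails as well.

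I expect the only genuine difficulty to lie in $(1)\Rightarrow(2)$: Theorem~\ref{teobcs} constrains a hypothetical alternative natural basis and therefore yields $(2)\Rightarrow(1)$ with essentially no work, but it contains no existence statement, so proving the converse requires actually exhibiting a second natural basis and checking the (routine) orthogonality relations. The delicate point there is the selection of the mixing scalars $a,b,c,d$, which has to be adapted to the value of $\lambda$ (and, over very small fields, might force one to blend in a third basis vector rather than only mixing $e_i$ and $e_k$).
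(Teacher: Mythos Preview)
The paper does not supply a proof of Corollary~\ref{corbcs}; it is quoted from \cite{bcs} as a preliminary result, so there is nothing in the paper to compare your argument against.

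That said, your argument is correct and is the standard one. The implication $(2)\Rightarrow(1)$ follows from Theorem~\ref{teobcs} exactly as you describe. For $(1)\Rightarrow(2)$, your two-vector mixing is the natural construction; over a field in which $\lambda$ has a square root $\beta$ (in particular over $\mathbb{C}$, the main setting of the paper) one may simply take $\eta_1=e_i+\beta e_k$ and $\eta_2=e_i-\beta e_k$, which avoids the case split on $\lambda=-1$; this is precisely the change of basis the paper itself employs later when proving that $\mathcal{L}_\mathcal{B}$ is not trivial once (2LI) fails. Your caution about very small fields is warranted: over $\mathbb{F}_2$ with $\lambda=1$ no invertible $2\times2$ matrix with a non-monomial column satisfies the required orthogonality, so the statement as phrased needs a mild hypothesis on $\mathbb{K}$. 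Finally, your restriction to $\dim E\geq 2$ in the degenerate clause is also necessary, since a one-dimensional degenerate algebra vacuously satisfies both (1) and (2).
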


\section{Automorphisms and evolution operators}

\begin{pro}
\label{prop31}
Let $g:E\rightarrow E$ be an automorphism of an evolution algebra $E$ with natural basis $\mathcal{B}=\{e_i:i=1,\dots,n\}$ and structure matrix $A=(a_{ij})$. Then, $\mathcal{B'}=\{g(e_i):i=1,\dots,n\}$ is a natural basis of $E$ and the evolution operator $L'$ associated with this basis fulfills that
\[M_{\mathcal{B'}}(L')=A=M_{\mathcal{B}}(L).\]
Furthermore, $M_{\mathcal{B}}(L')=GAG^{-1}$ and these two operators are the same linear map if and only if $AG=GA$, where $G=M_{\mathcal{B}}(g)$.
\end{pro}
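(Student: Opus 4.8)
The plan is to verify the four assertions in turn, each of which reduces to a short computation with the homomorphism property of $g$ together with a change-of-basis argument.

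First, since $g$ is a bijective linear map, $\mathcal{B}'=\{g(e_i):i=1,\dots,n\}$ is automatically a basis of $E$. To see that it is a \emph{natural} basis I would use that $g$ is an algebra homomorphism: for $i\neq j$,
\[g(e_i)\cdot g(e_j)=g(e_i\cdot e_j)=g(0)=0,\]
so the products of distinct elements of $\mathcal{B}'$ vanish. Next, to identify the structure matrix of $E$ relative to $\mathcal{B}'$ — and hence $M_{\mathcal{B}'}(L')$ — I would compute the square of each element of $\mathcal{B}'$ using that $g$ is a homomorphism and linear:
\[g(e_j)^2=g(e_j^2)=g\left(\sum_{i=1}^n a_{ij}e_i\right)=\sum_{i=1}^n a_{ij}\,g(e_i).\]
Thus the coordinate vector of $g(e_j)^2$ in the basis $\mathcal{B}'$ is the $j$-th column of $A$, so the structure matrix of $E$ with respect to $\mathcal{B}'$ is exactly $A$. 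By the definition of the evolution operator attached to a natural basis, this yields $M_{\mathcal{B}'}(L')=A=M_{\mathcal{B}}(L)$.

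For the third claim I would use the standard change-of-basis formula. Writing $\eta_j:=g(e_j)=\sum_i g_{ij}e_i$ shows that $G=M_{\mathcal{B}}(g)$ is precisely the transition matrix expressing $\mathcal{B}'$-coordinates in terms of $\mathcal{B}$-coordinates, i.e. $x_{\mathcal{B}}=G\,x_{\mathcal{B}'}$ for every $x\in E$. Applying this to $L'(x)$ and using $\left(L'(x)\right)_{\mathcal{B}'}=A\,x_{\mathcal{B}'}$ gives
\[\left(L'(x)\right)_{\mathcal{B}}=G\left(L'(x)\right)_{\mathcal{B}'}=GA\,x_{\mathcal{B}'}=GAG^{-1}x_{\mathcal{B}},\]
hence $M_{\mathcal{B}}(L')=GAG^{-1}$. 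Finally, $L$ and $L'$ are the same linear map if and only if their matrices in the common basis $\mathcal{B}$ coincide, that is $A=GAG^{-1}$, which is equivalent to $AG=GA$.

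The argument is essentially mechanical; the only point requiring care is fixing the direction of the change of basis — that it is $G$, and not $G^{-1}$, which conjugates $A$ — and this is pinned down by the relation $x_{\mathcal{B}}=G\,x_{\mathcal{B}'}$ that follows directly from $\eta_j=g(e_j)=\sum_i g_{ij}e_i$. One could alternatively obtain $M_{\mathcal{B}}(L')=GAG^{-1}$ by combining the identity $M_{\mathcal{B}'}(L')=A$ just proved with the usual formula relating the matrices of a fixed linear map in two bases, but the computation above is self-contained.
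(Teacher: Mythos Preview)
Your proof is correct and follows essentially the same approach as the paper's: both use the homomorphism property to verify that $\mathcal{B}'$ is natural and that $g(e_j)^2=\sum_i a_{ij}g(e_i)$, and then apply the change-of-basis formula (the paper writes it as $M_{\mathcal{B}}(L')=M(\mathcal{B}',\mathcal{B})M_{\mathcal{B}'}(L')M(\mathcal{B},\mathcal{B}')$, while you unfold it via $x_{\mathcal{B}}=G\,x_{\mathcal{B}'}$). Your explicit remark that bijectivity of $g$ ensures $\mathcal{B}'$ is a basis is a small addition the paper leaves implicit.
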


\begin{proof}
If $i\neq j$, then $g(e_i)\cdot g(e_j)=g(e_i\cdot e_j)=g(0)=0$, so $\mathcal{B'}$ is a natural basis.\\
Secondly, the evolution operator $L'$ maps each element of $\mathcal{B'}$ onto its square
\[g(e_i)^2=g(e_i^2)=g\left(\sum_{k=1}^n a_{ki}e_k\right)=\sum_{k=1}^n a_{ki}g(e_k),\]
that is to say, $[g(e_i)^2]_{\mathcal{B'}}=(a_{1i},\dots,a_{ni})^t$, which is the $i$-th column of $A$. Hence it follows that $M_{\mathcal{B'}}(L')=A$.\\
Finally, by the definition of $\mathcal{B'}$, we have that $G=M(\mathcal{B'},\mathcal{B})$, so $M_{\mathcal{B}}(L')=M(\mathcal{B'},\mathcal{B})M_{\mathcal{B'}}(L')M(\mathcal{B},\mathcal{B'})=GAG^{-1}$. This operator is the same as $L$ if and only if $A=GAG^{-1}$, or what is the same, $AG=GA$.
\end{proof}

\begin{rmk}
On the same assumptions as above, $G^nAG^{-n}$ is the matrix of an evolution operator with respect to the basis $\mathcal{B}$, for all $n\in\mathbb{Z}$, since $g^n$ is also an automorphism.
\end{rmk}

We show below two examples in which $G^nAG^{-n}$ provides a finite number of evolution operators and an infinite number of them, respectively.

\begin{exa}
Let $E$ be the evolution algebra with natural basis $\mathcal{B}=\{e_1,e_2,e_3\}$ and structure matrix
\[A=\begin{pmatrix}
a & a & b\\ 
(\sqrt{2}-1)a & (\sqrt{2}-1)a & (\sqrt{2}-1)b\\ 
c & c & d
\end{pmatrix}\!\!,\]
for some $a,b,c,d\in\mathbb{C}$. Let us consider the invertible linear map
\[G=\begin{pmatrix}
\frac{1}{\sqrt{2}} & \frac{1}{\sqrt{2}} & 0\\ 
\frac{1}{\sqrt{2}} & -\frac{1}{\sqrt{2}} & 0\\ 
0 & 0 & 1
\end{pmatrix}\!\!.\]
which satisfies that $G^{-1}=G$. This map is an automorphism, since
\[A(G*G)=A\begin{pmatrix}
\frac{1}{2} & 0 & 0\\ 
-\frac{1}{2} & 0 & 0\\ 
0 & 0 & 0
\end{pmatrix}=0 \textup{ and }AG^{(2)}=A=GA.\]
Then, $\mathcal{B'}=\left\{\frac{1}{\sqrt{2}}(e_1+e_2),\frac{1}{\sqrt{2}}(e_1-e_2),e_3\right\}$ is a natural basis whose evolution operator satisfies $M_{\mathcal{B'}}(L')=A$. The expression of this operator with respect to the basis $\mathcal{B}$ is
\[M_{\mathcal{B}}(L')=GAG^{-1}=GAG=AG=\begin{pmatrix}
\sqrt{2}a & 0 & b\\ 
(2-\sqrt{2})a & 0 & (\sqrt{2}-1)b\\ 
\sqrt{2}c & 0 & d
\end{pmatrix}\!\!,\]
which is different from $A$ if and only if $a$ or $c$ are nonzero. With this assumption, $L$ and $L'$ are not the same linear map, and there are two different evolution operators. Note that, since $G^2=G$, the matrix $G^nAG^{-n}$ is $A$ or $AG$.
\end{exa}

The following example shows that there can be infinite evolution operators, different from each other, but with the same matrix with respect to its natural basis.

\begin{exa}
Let $E$ be the evolution algebra with natural basis $\mathcal{B}=\{e_1,e_2,e_3\}$ and structure matrix
\[A=\begin{pmatrix}
1 & 1 & 1\\ 
-\frac{i}{2} & -\frac{i}{2} & -\frac{i}{2}\\ 
-\frac{i}{2} & -\frac{i}{2} & -\frac{i}{2}
\end{pmatrix}\!\!,\]
Let us consider the invertible linear map
\[G=\begin{pmatrix}
\frac{5}{3} & -\frac{4i}{3} & 0\\ 
0 & 0 & 1\\ 
-\frac{4i}{3} & -\frac{5}{3} & 0
\end{pmatrix}\!\!,\]
which satisfies that $G^{-1}=G^t$. This map is an automorphism, since
\[A(G*G)=A\begin{pmatrix}
-\frac{20i}{9} & 0 & 0\\ 
0 & 0 & 0\\ 
\frac{20i}{9} & 0 & 0
\end{pmatrix}=0 \textup{ and }AG^{(2)}=A=GA.\]
Then, $\mathcal{B'}=\left\{\frac{1}{3}(5e_1-4ie_3),-\frac{1}{3}(4ie_1+5e_3),e_2\right\}$ is a natural basis whose evolution operator satisfies $M_{\mathcal{B'}}(L')=A$. The expression of this operator with respect to the basis $\mathcal{B}$ is
\[M_{\mathcal{B}}(L')=GAG^{-1}=GAG^t=AG^t=\begin{pmatrix}
\frac{5-4i}{3} & 1 & -\frac{5+4i}{3}\\ 
-\frac{4+5i}{6} & -\frac{i}{2} & \frac{-4+5i}{6}\\ 
-\frac{4+5i}{6} & -\frac{i}{2} & \frac{-4+5i}{6}
\end{pmatrix}\!\!,\]
which is different from $A$, so $L$ and $L'$ are not the same linear map. Let us see that the matrix $G^nAG^{-n}$, for $n\geq0$, provides infinite evolution operators, all of them different from each other.\\
Since $GA=A$, then $G^nAG^{-n}=AG^{-n}=A(G^t)^n$. By contradiction, suppose there exist $m>n\geq0$, such that $A(G^t)^m=A(G^t)^n$, or what is the same
\begin{align} 
A((G^t)^k-I)=0,\label{eq1}
\end{align}
where $k=m-n$. The eigenvalues of $G^t$ are $1$, $\lambda$ and $\frac{1}{\lambda}$, where
\[\lambda=\frac{1}{3}+\frac{2\sqrt{2}}{3}\,i.\]
Since $G^t$ has three different eigenvalues, it is diagonalizable, so there exists $P$ such that $G^t=PDP^{-1}$, with $D=diag(1,\lambda,\frac{1}{\lambda})$. Then, (\ref{eq1}) is equivalent to $AP(D^k-I)P^{-1}=0$, and thus, $AP(D^k-I)=0$. Let us denote the $i$-th column of $P$ as $p_i$. Then we have
\[0=A\begin{pmatrix}
    \vrule & \vrule & \vrule\\
    p_1   & p_2   & p_3\\
    \vrule & \vrule & \vrule
\end{pmatrix}
\begin{pmatrix}
0 & 0 & 0\\ 
0 & \lambda^k-1 & 0\\ 
0 & 0 & \frac{1}{\lambda^k}-1
\end{pmatrix}=$$ $$=A\begin{pmatrix}
    0 & \vrule & \vrule\\
    0 & (\lambda^k-1)p_2   & (\frac{1}{\lambda^k}-1)p_3\\
    0 & \vrule & \vrule
\end{pmatrix}\!\!.\]
We distinguish two cases
\begin{itemize}
    \item If $\lambda^k\neq1$, then $p_2,p_3\in null(A)$. Therefore, $0=AP(D-I)P^{-1}=A(G^{t}-I)$, which is a contradiction due to $AG^t\neq A$.
    \item If $\lambda^k=1$, since the argument of $\lambda$ is $\alpha=\arccos{\left(\frac{1}{3}\right)}$, then $k\alpha=2\pi l$, with $l\in\mathbb{Z}$. This is equivalent to $\frac{\alpha}{\pi}=\frac{2l}{k}$, which is a contradiction because $\frac{\alpha}{\pi}$ is not a rational number.
\end{itemize}
\end{exa}

In the previous example we have made use of the following proposition, which was given by Aigner and Ziegler in \cite{AZ}.

\begin{pro}
For every odd integer $n\geq3$, the number $\frac{1}{\pi}\arccos{\left(\frac{1}{\sqrt{n}}\right)}$ is irrational.
\end{pro}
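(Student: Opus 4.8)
The plan is to run the classical argument (essentially the one in \cite{AZ}): reduce the statement to a divisibility fact about the integer sequence produced by the Chebyshev recurrence. Write $\theta=\arccos\!\left(\tfrac{1}{\sqrt n}\right)$, so that $\cos\theta=\tfrac{1}{\sqrt n}$; since $0<\tfrac{1}{\sqrt n}<1$ we have $\theta\in\left(0,\tfrac{\pi}{2}\right)$, hence $\tfrac{\theta}{\pi}\in\left(0,\tfrac12\right)$. Arguing by contradiction, suppose $\tfrac{\theta}{\pi}=\tfrac{p}{q}$ in lowest terms; because $\tfrac{\theta}{\pi}\in\left(0,\tfrac12\right)$ we necessarily have $q\geq 3$.

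First I would prove, by induction on $k\geq 0$, that $\cos(k\theta)=\frac{A_k}{(\sqrt n)^{k}}$ for integers $A_k$ with $A_0=A_1=1$. The identity $\cos((k+1)\theta)=2\cos\theta\,\cos(k\theta)-\cos((k-1)\theta)$, multiplied through by $(\sqrt n)^{k+1}$ and using $(\sqrt n)^2=n$, yields the recurrence $A_{k+1}=2A_k-nA_{k-1}$, which keeps every $A_k$ an integer.

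The arithmetic heart is the next step: $n\nmid A_k$ for every $k\geq 1$. Reducing the recurrence modulo $n$ gives $A_{k+1}\equiv 2A_k\pmod n$, so $A_k\equiv 2^{k-1}\pmod n$ for all $k\geq 1$; as $n$ is odd, $\gcd(n,2)=1$, and since $n>1$ this means $n$ cannot divide the power of two $2^{k-1}$. This is the only place the hypothesis ``$n$ odd'' is used, and it is precisely what makes the argument run.

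To close, from $q\theta=p\pi$ we get $\cos(q\theta)=(-1)^p=\pm 1$, hence $A_q=\pm(\sqrt n)^{q}$. If $q$ is even, then $|A_q|=n^{q/2}$ with $q/2\geq 1$, so $n\mid A_q$. If $q$ is odd, then an integer equal to $\pm(\sqrt n)^q$ forces $\sqrt n\in\mathbb{Q}$, i.e.\ $n=m^2$ with $m\geq 2$ an integer, and then $|A_q|=m^{q}$ with $q\geq 3$, so again $n=m^2\mid A_q$. Both cases contradict $n\nmid A_q$, finishing the proof. The only subtlety is the odd-$q$ branch, where one must first observe that an integer equal to $\pm(\sqrt n)^q$ cannot occur unless $n$ is a perfect square; everything else is routine manipulation of the recurrence.
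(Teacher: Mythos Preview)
The paper does not itself prove this proposition; it merely quotes the result from Aigner and Ziegler \cite{AZ}. Your argument is correct and is exactly the Chebyshev--recurrence proof given there: the integers $A_k$ defined by $A_{k+1}=2A_k-nA_{k-1}$ satisfy $A_k\equiv 2^{k-1}\pmod n$, so the oddness of $n$ gives $n\nmid A_k$, which is incompatible with $A_q=\pm(\sqrt n)^q$ for $q\geq 3$; your split into the cases $q$ even and $q$ odd (the latter forcing $n$ to be a perfect square) cleanly disposes of both possibilities.
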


As a consequence of Proposition \ref{prop31} we get the following result.

\begin{cor}
Let $E$ be an evolution algebra with natural basis $\mathcal{B}=\{e_i:i=1,\dots,n\}$ satisfying Property (2LI). Then $Aut(E)\subseteq S_n\rtimes(\mathbb{K}^\times)^n$.
\end{cor}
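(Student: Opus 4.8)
The plan is to show that the matrix of any automorphism in the basis $\mathcal{B}$ is a monomial (generalized permutation) matrix, which is precisely what membership in $S_n\rtimes(\mathbb{K}^\times)^n$ means once we realize this group inside $GL_n(\mathbb{K})$ as the group of matrices having exactly one nonzero entry in each row and in each column (with $S_n$ acting on $(\mathbb{K}^\times)^n$ by permutation of coordinates).

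First I would observe that Property (2LI) already forces $E$ to be non-degenerate: if $e_j^2=0$ for some $j$, then for any $i\neq j$ the set $\{e_i^2,e_j^2\}=\{e_i^2,0\}$ is linearly dependent, contradicting (2LI). Hence Corollary~\ref{corbcs} is applicable and yields that $E$ has a unique natural basis; that is, every natural basis of $E$ coincides with $\mathcal{B}$ up to rearrangement and multiplication of its vectors by nonzero scalars.

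Next, let $g\in Aut(E)$ be arbitrary. By Proposition~\ref{prop31}, the set $\mathcal{B'}=\{g(e_i):i=1,\dots,n\}$ is again a natural basis of $E$. Applying the uniqueness of the natural basis just established, there exist a permutation $\sigma\in S_n$ and scalars $\lambda_1,\dots,\lambda_n\in\mathbb{K}^\times$ such that $g(e_i)=\lambda_i\,e_{\sigma(i)}$ for all $i$. Therefore the matrix $G=M_{\mathcal{B}}(g)$ has exactly one nonzero entry in each row and in each column, i.e. $G$ is a monomial matrix, so $G\in S_n\rtimes(\mathbb{K}^\times)^n$. Since $g$ was arbitrary, this gives $Aut(E)\subseteq S_n\rtimes(\mathbb{K}^\times)^n$.

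The only genuinely delicate point is the very first step: noticing that Property (2LI) implies non-degeneracy, which is what makes Corollary~\ref{corbcs} usable here. After that, the argument is just a translation of the phrase \emph{unique natural basis} into the statement that an automorphism permutes and rescales the vectors of $\mathcal{B}$, hence has monomial matrix. A secondary care point is to state explicitly how $S_n\rtimes(\mathbb{K}^\times)^n$ is being embedded in $GL_n(\mathbb{K})$, so that the claimed inclusion is literally an inclusion of subgroups of the automorphism group of the underlying vector space.
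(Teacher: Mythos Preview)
Your proof is correct and follows essentially the same route as the paper: use Proposition~\ref{prop31} to see that $\{g(e_i)\}$ is a natural basis, then invoke Corollary~\ref{corbcs} to conclude it differs from $\mathcal{B}$ only by permutation and nonzero scalars. You simply make explicit two points the paper leaves implicit, namely that (2LI) forces non-degeneracy (so Corollary~\ref{corbcs} applies) and how $S_n\rtimes(\mathbb{K}^\times)^n$ sits inside $GL_n(\mathbb{K})$ as monomial matrices.
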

\begin{proof}
Let $g$ be an automorphism. Then, $\mathcal{B}'=\{g(e_i):i=1,\dots,n\}$ is a natural basis by Proposition \ref{prop31}. Since $E$ has Property (2LI), then $E$ has a unique natural basis by Corollary \ref{corbcs}. That is, $\mathcal{B}'$ is obtained from $\mathcal{B}$ by a rearrangement and product by nonzero scalars.
\end{proof}

\begin{pro}
\label{prop36}
Let $E$ be a real evolution algebra with natural basis $\mathcal{B}=\{e_i:i=1,\dots,n\}$ and structure matrix $A=(a_{ij})$. Let suppose there exists $\pi\in S_n$ such that $a_{i\pi(i)}\neq0$, for all $i=1,\dots,n$. Then, $Aut(E)\cap (\mathbb{R}^\times)^n=\{id\}$.
\end{pro}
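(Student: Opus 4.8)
The plan is to identify $\mathrm{Aut}(E)\cap(\mathbb{R}^\times)^n$ with the set of automorphisms whose matrix in $\mathcal{B}$ is diagonal with nonzero real entries, say $G=\mathrm{diag}(d_1,\dots,d_n)$ with $d_i\in\mathbb{R}^\times$, and then to read off the defining equations from the endomorphism characterization proved above ($A(G*G)=0$ and $AG^{(2)}=GA$). First I would observe that for a diagonal $G$ the columns $G_{*i}$ and $G_{*j}$ have disjoint supports whenever $i\neq j$, so every Hadamard product $G_{*i}\odot G_{*j}$ with $i<j$ vanishes; hence $G*G=0$ and $A(G*G)=0$ is automatic. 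Thus the only real constraint is $AG^{(2)}=GA$.

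Next I would compute this condition entrywise. Since $G^{(2)}=\mathrm{diag}(d_1^2,\dots,d_n^2)$, the $(i,j)$ entry of $AG^{(2)}$ is $a_{ij}d_j^2$ and that of $GA$ is $d_i a_{ij}$, so the equation is $a_{ij}(d_j^2-d_i)=0$ for all $i,j$; in particular, $a_{ij}\neq0$ forces $d_i=d_j^2$. Now I would invoke the hypothesis: since $a_{i\pi(i)}\neq0$ for every $i$, we get $d_i=d_{\pi(i)}^2$ for all $i$. As $d_{\pi(i)}$ is a nonzero real number, $d_i=d_{\pi(i)}^2>0$, so every $d_i$ is strictly positive.

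Finally I would run a cycle-decomposition argument on $\pi$. For a cycle $(i_1\,i_2\,\cdots\,i_k)$ of $\pi$, iterating $d_{i_1}=d_{i_2}^2$, $d_{i_2}=d_{i_3}^2$, $\dots$, $d_{i_k}=d_{i_1}^2$ gives $d_{i_1}=d_{i_1}^{2^k}$, i.e. $d_{i_1}^{2^k-1}=1$; since $d_{i_1}$ is a positive real this forces $d_{i_1}=1$, and then $d_{i_2}=\dots=d_{i_k}=1$ follows along the cycle using positivity again. Because $\pi$ is a product of disjoint cycles covering $\{1,\dots,n\}$, we conclude $d_i=1$ for all $i$, that is $G=I$ and $g=\mathrm{id}$.

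The one delicate point — and the place where the real-field hypothesis is genuinely used — is the implication $d_{i_1}^{2^k-1}=1\Rightarrow d_{i_1}=1$: over $\mathbb{C}$ this equation has many other solutions (roots of unity), so the argument relies on the positivity observation $d_i=d_{\pi(i)}^2>0$ together with working over $\mathbb{R}$. The remaining steps are routine bookkeeping with the diagonal matrix $G$.
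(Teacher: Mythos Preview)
Your proof is correct and follows essentially the same route as the paper: identify diagonal automorphisms, note $A(G*G)=0$ is automatic, reduce $AG^{(2)}=GA$ to $a_{ij}\neq0\Rightarrow d_i=d_j^2$, and run a cycle argument on $\pi$ to get $d_i=d_i^{2^k}$. The only cosmetic difference is that the paper dispatches $d_i^{2^k-1}=1$ directly by observing that $2^k-1$ is odd (so $d_i=1$ is the unique nonzero real root), whereas you first note $d_i=d_{\pi(i)}^2>0$ and then use positivity; both work.
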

\begin{proof}
A linear map $g$ satisfies $g\in Aut(E)\cap (\mathbb{R}^\times)^n$ if and only if $G=M_\mathcal{B}(g)=Diag(\lambda)$, for some $\lambda=(\lambda_i)_{i=1}^n$, $\lambda_i\neq0$, $A(G*G)=0$ and $AG^{(2)}=GA$. The identity $A(G*G)=0$ holds trivially and $AG^{(2)}=GA$ is equivalent to $a_{ij}\lambda_j^2=\lambda_i a_{ij}$, that is $a_{ij}=0$ or $\lambda_i=\lambda_j^2$.

Le us write $\pi$ as a product of disjoint cycles, $\pi=c_1\circ\dots\circ c_r$. For all $i$ there is a unique $j$ such that $i\in supp(c_j)$. Let $l_j$ be the length of the cycle $c_j$. Then,
\[\lambda_i=\lambda_{c_j(i)}^2=\dots=\lambda_{c_j^{l_j}(i)}^{2^{l_j}}=\lambda_i^{2^{l_j}}.\]
The only real non-zero solution of the equation $\lambda_i=\lambda_i^{2^{l_j}}$ is $\lambda_i=1$, so $g=id$.
\end{proof}

\begin{pro}
Let $E$ be a real evolution algebra with natural basis $\mathcal{B}=\{e_i:i\in\Lambda=\{1,\dots,n\}\}$ and structure matrix $A=(a_{ij})$ without null rows. Then, $Aut(E)\cap (\mathbb{R}^\times)^n=\{id\}$.
\end{pro}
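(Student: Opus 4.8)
The plan is to reduce to the same computation carried out in the proof of Proposition~\ref{prop36} and then to replace the cycle-decomposition argument there (which relied on having an honest permutation) by an argument about the eventual periodicity of iterates of an arbitrary self-map of a finite set. Concretely, exactly as in that proof, a linear map $g$ lies in $\mathrm{Aut}(E)\cap(\mathbb{R}^\times)^n$ if and only if $G=M_\mathcal{B}(g)=\mathrm{Diag}(\lambda)$ for some $\lambda=(\lambda_i)_{i=1}^n$ with all $\lambda_i\neq0$, the identity $A(G*G)=0$ holds automatically, and $AG^{(2)}=GA$ amounts to $a_{ij}\lambda_j^2=\lambda_i a_{ij}$ for all $i,j$; that is, for each pair $(i,j)$ either $a_{ij}=0$ or $\lambda_i=\lambda_j^2$. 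The goal is to deduce $\lambda_i=1$ for every $i$.

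The next step is to exploit the hypothesis that $A$ has no null rows: for each $i\in\Lambda$ there is some index $f(i)\in\Lambda$ with $a_{i,f(i)}\neq0$, and hence $\lambda_i=\lambda_{f(i)}^{2}$. Fixing one such choice defines a map $f\colon\Lambda\to\Lambda$, and an immediate induction gives $\lambda_i=\lambda_{f^{k}(i)}^{\,2^{k}}$ for all $k\geq0$. In the setting of Proposition~\ref{prop36} one could take $f=\pi^{-1}$, which is a bijection; the only real change now is that $f$ need not be injective.

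Since $\Lambda$ is finite, the forward orbit $i,f(i),f^{2}(i),\dots$ is eventually periodic: there exist $p\geq0$ and $q\geq1$ with $f^{p+q}(i)=f^{p}(i)$. Writing $m=f^{p}(i)$, we obtain $f^{q}(m)=m$, whence $\lambda_m=\lambda_{f^{q}(m)}^{\,2^{q}}=\lambda_m^{2^{q}}$. As $\lambda_m\neq0$, this yields $\lambda_m^{\,2^{q}-1}=1$; since $2^{q}-1$ is odd and $\lambda_m$ is real, the only possibility is $\lambda_m=1$. Finally $\lambda_i=\lambda_m^{\,2^{p}}=1$, and as $i$ was arbitrary, $G=I$, i.e. $g=\mathrm{id}$.

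I do not anticipate a genuine obstacle: the whole point is that discarding the ``system of distinct representatives'' hypothesis of Proposition~\ref{prop36} costs nothing, because one never needs $f$ to be a bijection—only the eventual periodicity of its iterates on a finite set, together with the fact that $x\mapsto x^{\,2^{q}-1}$ is injective on $\mathbb{R}$. The one point worth stating carefully is why \emph{every} $\lambda_i$ equals $1$ and not just those indices $m$ that lie on a cycle of $f$; this is precisely what the identity $\lambda_i=\lambda_{f^{p}(i)}^{\,2^{p}}$ provides.
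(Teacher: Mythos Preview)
Your proof is correct, but it takes a genuinely different route from the paper. The paper proceeds by induction on $n$: the base case $n=1$ is trivial, and in the inductive step one splits into two cases according to whether $A$ has the form $\mathrm{Diag}(\mu)P$ for some permutation matrix $P$ (where Proposition~\ref{prop36} applies directly) or not. In the latter case the paper argues that there is an index $k$ such that the $(n-1)\times(n-1)$ submatrix obtained by deleting row and column $k$ still has no null rows, applies the induction hypothesis to conclude $\lambda_i=1$ for $i\neq k$, and then uses the non-nullity of row $k$ to force $\lambda_k=1$. Your argument instead defines a single self-map $f$ of $\Lambda$ and exploits the eventual periodicity of its orbits, which sidesteps both the induction and the case split; it also makes transparent that the bijectivity hypothesis in Proposition~\ref{prop36} was never really needed, only the finiteness of the index set. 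The paper's approach, by contrast, keeps Proposition~\ref{prop36} as a genuine black box and builds the general result on top of it.
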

\begin{proof}
We proceed by induction in $n$. For $n=1$, it is trivial. We assume that the statement holds for dimension $n-1$ in order to prove that it holds for dimension $n$.

If $A=Diag(\lambda)P$, with $\lambda=(\lambda_i)_{i\in\Lambda}$, $\lambda_i\neq0$ and $P$ being a permutation matrix, then the result follows from Proposition \ref{prop36}.

In another case, as in the proof of Proposition \ref{prop36}, we have $g\in Aut(E)\cap (\mathbb{R}^\times)^n$ if and only if $G=M_\mathcal{B}(g)=Diag(\lambda)$ and either $a_{ij}=0$ or $\lambda_i=\lambda_j^2$, for all $i,j\in\Lambda$. Since $A\neq Diag(\lambda)P$, there exists $k\in\Lambda$ such that the matrix $A'=(a_{ij})_{i,j\in\Lambda\setminus\{k\}}$ has no null rows. By induction hypothesis, $\lambda_i=1$, for all $i\in\Lambda\setminus\{k\}$. Since row $k$ is not null, there exists $l$ such that $a_{kl}\neq0$ and then $\lambda_k=\lambda_l^2$. If $k\neq l$, then $\lambda_k=\lambda_l^2=1^2=1$. If $k=l$, then $\lambda_k=\lambda_k^2$ and the only non-zero solution of this equation is $\lambda_k=1$. Therefore, $\lambda_i=1$, for all $i\in\Lambda$, so $g=id$.
\end{proof}

\begin{exa}
Let $E$ be the complex evolution algebra with natural basis $\mathcal{B}=\{e_1,e_2\}$ and structure matrix
\[A=\begin{pmatrix}
0 & a\\ 
b & 0
\end{pmatrix}\!\!,\]
with $a,b\neq0$. Let us consider the linear map $g:E\rightarrow E$ with $G=M_\mathcal{B}(g)=Diag(\lambda_1,\lambda_2)$ and $\lambda_1=-\frac{1}{2}+\frac{\sqrt{3}}{2}i$, $\lambda_2=-\frac{1}{2}-\frac{\sqrt{3}}{2}i$. Then, $g\in Aut(E)\cap (\mathbb{C}^\times)^n$.
\end{exa}

\begin{exa}
Let $E$ be the real evolution algebra with natural basis $\mathcal{B}=\{e_1,e_2\}$ and structure matrix
\[A=\begin{pmatrix}
a & b\\ 
0 & 0
\end{pmatrix}\!\!,\]
with $a,b\neq0$. Let us consider the linear map $g:E\rightarrow E$ with $G=M_\mathcal{B}(g)=Diag(1,-1)$. Then, $g\in Aut(E)\cap (\mathbb{R}^\times)^n$.
\end{exa}

\begin{pro}
Let $E$ be an evolution algebra with natural basis $\mathcal{B}=\{e_i:i=1,\dots,n\}$ and structure matrix $A=(a_{ij})$. Then, $S_n\subseteq Aut(E)$ if and only if $A=\alpha J + \beta I$, where $J$ is the all-ones matrix and $\alpha,\beta\in\mathbb{K}$.
\end{pro}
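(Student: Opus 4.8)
The plan is to reduce the statement, by means of the characterization of homomorphisms recalled in the preliminaries, to the classical description of the matrices that commute with every permutation matrix.

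First I would make the identification $S_n\subseteq Aut(E)$ precise: a permutation $\sigma\in S_n$ is identified with the linear map $g_\sigma\colon E\to E$ given by $g_\sigma(e_i)=e_{\sigma(i)}$, so that the inclusion means that every $g_\sigma$ is an automorphism. Let $P_\sigma=M_\mathcal{B}(g_\sigma)$ be the associated permutation matrix, whose $i$-th column is the standard basis vector $\mathbf e_{\sigma(i)}$. Each $P_\sigma$ is invertible, so $g_\sigma\in Aut(E)$ if and only if $g_\sigma$ is an endomorphism, i.e. (by the cited characterization) if and only if $A(P_\sigma* P_\sigma)=0$ and $AP_\sigma^{(2)}=P_\sigma A$. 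Now $P_\sigma$ is a $0$–$1$ matrix, hence $P_\sigma^{(2)}=P_\sigma$; and for $i<j$ the $(i,j)$-column of $P_\sigma* P_\sigma$ equals $\mathbf e_{\sigma(i)}\odot\mathbf e_{\sigma(j)}=0$ because $\sigma(i)\neq\sigma(j)$, so $P_\sigma* P_\sigma=0$ and the first condition holds automatically. Therefore $g_\sigma\in Aut(E)$ if and only if $AP_\sigma=P_\sigma A$, and the statement becomes: $S_n\subseteq Aut(E)$ if and only if $A$ commutes with every permutation matrix.

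For the implication $(\Leftarrow)$ I would simply note that if $A=\alpha J+\beta I$, then $A$ commutes with every $P_\sigma$: the identity is central, while $P_\sigma J=J=JP_\sigma$ since every row and every column of a permutation matrix sums to $1$. For $(\Rightarrow)$ I would rewrite $AP_\sigma=P_\sigma A$ as the entrywise condition $a_{ij}=a_{\sigma(i)\sigma(j)}$ for all $i,j$ and all $\sigma\in S_n$. The case $n=1$ is trivial; for $n\geq 2$, transitivity of the $S_n$-action on $\{1,\dots,n\}$ forces all diagonal entries $a_{ii}$ to be equal, and $2$-transitivity of the action on ordered pairs of distinct indices forces all off-diagonal entries $a_{ij}$ with $i\neq j$ to be equal; writing $\alpha$ for the common off-diagonal value and $\beta$ for the common diagonal value minus $\alpha$ yields $A=\alpha J+\beta I$.

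I do not anticipate a genuine obstacle: the only points that require care are verifying that $P_\sigma* P_\sigma=0$ (so that the homomorphism condition collapses exactly to commutativity with $P_\sigma$) and invoking $2$-transitivity cleanly to pin down the off-diagonal entries; the rest is routine.
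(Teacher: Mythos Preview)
Your proposal is correct and follows exactly the same route as the paper: reduce the condition $S_n\subseteq Aut(E)$ via the homomorphism characterization to $AP_\sigma=P_\sigma A$ for all $\sigma$ (noting $P_\sigma*P_\sigma=0$ and $P_\sigma^{(2)}=P_\sigma$), and then invoke the description of matrices commuting with every permutation matrix. The only difference is that the paper simply cites the last fact, whereas you supply the $2$-transitivity argument for it.
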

\begin{proof}
Let $P$ be a permutation matrix. This permutation is an automorphism if and only if $A(P*P)=0$ and $AP^{(2)}=PA$. The first equality is trivially satisfied and the second is equivalent to $AP=PA$. The only matrices that commute with any permutation matrix are those of the form $\alpha J + \beta I$.
\end{proof}

\section{Set of evolution operators}

\begin{pro}
Let $E$ be an evolution algebra with natural basis $\mathcal{B}=\{e_i:i=1,\dots,n\}$ and structure matrix $A=(a_{ij})$. Let $\mathcal{B'}=\{\eta_i:i=1,\dots,n\}$ be another basis of $E$ and $G=M_{\mathcal{B}}(g)$, where $g$ is the linear map $g(e_i)=\eta_i$. Then, $\mathcal{B'}$ is a natural basis if and only if $A(G*G)=0$. In this case, the evolution operator $L'$ associated with this basis fulfills that
\[M_{\mathcal{B'}}(L')=G^{-1}AG^{(2)}, M_{\mathcal{B}}(L')=AG^{(2)}G^{-1}.\]
Furthermore, these two operators are the same linear map if and only if $AG=AG^{(2)}$.
\end{pro}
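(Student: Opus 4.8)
The plan is to mimic the proof of Proposition \ref{prop31}, keeping in mind that here $g$ is merely the linear map carrying one basis onto another, so that $G$ is invertible but need not satisfy $AG^{(2)}=GA$. First I would record the natural-basis criterion: $\mathcal{B}'$ is a natural basis exactly when $\eta_i\cdot\eta_j=0$ for all $i\neq j$. Writing $\eta_i=g(e_i)$ and passing to $\mathcal{B}$-coordinates via Proposition \ref{prop} gives $(\eta_i\cdot\eta_j)_{\mathcal{B}}=A(g(e_i)_{\mathcal{B}}\odot g(e_j)_{\mathcal{B}})=A(G_{*i}\odot G_{*j})$. Demanding that this vanish for every pair $i<j$ is, by the very definition of the $*$ operation, the single matrix identity $A(G*G)=0$, which proves the first equivalence.

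For the matrix of $L'$ in $\mathcal{B}'$, note that the $i$-th column of $M_{\mathcal{B}'}(L')$ is $[L'(\eta_i)]_{\mathcal{B}'}=[\eta_i^2]_{\mathcal{B}'}$. Proposition \ref{prop} yields $[\eta_i^2]_{\mathcal{B}}=A\,(G_{*i})^{(2)}=(AG^{(2)})_{*i}$; that is, the matrix whose $i$-th column is $[\eta_i^2]_{\mathcal{B}}$ equals $AG^{(2)}$. Since $\eta_i=\sum_k g_{ki}e_k$, the change-of-coordinates matrix from $\mathcal{B}'$ to $\mathcal{B}$ is $G$, hence from $\mathcal{B}$ to $\mathcal{B}'$ it is $G^{-1}$; therefore $[\eta_i^2]_{\mathcal{B}'}=G^{-1}[\eta_i^2]_{\mathcal{B}}=G^{-1}(AG^{(2)})_{*i}$. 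Reading this off column by column gives $M_{\mathcal{B}'}(L')=G^{-1}AG^{(2)}$, and then conjugating by the same change-of-basis matrices gives $M_{\mathcal{B}}(L')=G\,M_{\mathcal{B}'}(L')\,G^{-1}=AG^{(2)}G^{-1}$.

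Finally, $L$ and $L'$ are the same linear map if and only if their matrices with respect to the common basis $\mathcal{B}$ coincide, i.e. $A=AG^{(2)}G^{-1}$; multiplying on the right by the invertible matrix $G$ turns this into the equivalent condition $AG=AG^{(2)}$. As a consistency check, when $g$ is an automorphism one may substitute $AG^{(2)}=GA$ and recover $M_{\mathcal{B}}(L')=GAG^{-1}$ together with the commutation condition $AG=GA$ of Proposition \ref{prop31}.

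The whole argument is computational and the steps are short; the only thing to be careful about is the bookkeeping — which of $G$ and $G^{-1}$ implements each direction of the coordinate change — and remembering that invertibility of $G$, used throughout, is automatic because $g$ sends a basis to a basis, so that the identity $AG^{(2)}=GA$ is neither available nor needed.
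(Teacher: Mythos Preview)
Your proof is correct and follows essentially the same route as the paper's: both verify the natural-basis criterion via Proposition~\ref{prop} column by column, compute $[\eta_j^2]_{\mathcal{B}}=AG_{*j}^{(2)}$ and convert to $\mathcal{B}'$-coordinates with $G^{-1}$, then conjugate and right-multiply by $G$ for the final equivalence. Your added consistency check with Proposition~\ref{prop31} is a nice touch not present in the paper.
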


\begin{proof}
The basis $\mathcal{B'}$ is a natural basis if and only if $\eta_i\cdot\eta_j=0$, for all $i\neq j$. By the definition of $g$, this is the same as $g(e_i)\cdot g(e_j)=0$. Taking coordinates with respect to $\mathcal{B}$,
\[0=\left(g(e_i)\cdot g(e_j)\right)_\mathcal{B}=A\left(g(e_i)_\mathcal{B}\odot g(e_j)_\mathcal{B}\right)=A\left(G_{*i}\odot G_{*j}\right),\]
so this is equivalent to $A(G*G)=0$. Since for all $j=1,\dots,n$
\[\left(\eta_j^2\right)_\mathcal{B'}=M(\mathcal{B},\mathcal{B'})\left(\eta_j^2\right)_\mathcal{B}=G^{-1}A\left(\eta_j\right)_\mathcal{B}^{(2)}=G^{-1}AG_{*j}^{(2)},\]
then $M_{\mathcal{B'}}(L')=G^{-1}AG^{(2)}$. Finally, \[M_{\mathcal{B}}(L')=M(\mathcal{B'},\mathcal{B})M_{\mathcal{B'}}(L')M(\mathcal{B},\mathcal{B'})=GG^{-1}AG^{(2)}G^{-1}=AG^{(2)}G^{-1}.\] This operator is the same as $L$ if and only if $A=AG^{(2)}G^{-1}$, or what is the same, $AG=AG^{(2)}$.
\end{proof}

\begin{defi}
Given an evolution algebra $E$ with natural basis $\mathcal{B}$ and structure matrix $A$, we define the set of all evolution operators with respect to its own natural basis as
\[\mathcal{L}=\left\{G^{-1}AG^{(2)}: A(G*G)=0, rank(G)=n\right\}\]
and with respect to the natural basis $\mathcal{B}$ as
\[\mathcal{L}_{\mathcal{B}}=\left\{AG^{(2)}G^{-1}: A(G*G)=0, rank(G)=n\right\}.\]
\end{defi}

\begin{pro}
All evolution operators of an evolution algebra $E$ have the same rank.
\end{pro}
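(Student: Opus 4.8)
The plan is to fix one natural basis $\mathcal{B}$ with structure matrix $A$ and to show that every evolution operator has rank $rank(A)$. By the first proposition of this section, an arbitrary natural basis $\mathcal{B}'$ corresponds to an invertible matrix $G$ with $A(G*G)=0$, and the associated evolution operator satisfies $M_{\mathcal{B}'}(L')=G^{-1}AG^{(2)}$. Since the rank of a linear map equals the rank of any of its matrix representations, and left multiplication by the invertible matrix $G^{-1}$ preserves rank, the whole statement reduces to proving $rank\bigl(AG^{(2)}\bigr)=rank(A)$ for every invertible $G$ with $A(G*G)=0$; the choice $G=I$ recovers $L$ itself, so this indeed forces all evolution operators to share the rank $rank(A)$.

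The inequality $rank\bigl(AG^{(2)}\bigr)\le rank(A)$ is immediate, since every column of $AG^{(2)}$ is a $\mathbb{K}$-combination of columns of $A$. For the reverse inequality I would argue with column spaces: writing $W$ for the column space of $G^{(2)}$, the column space of $AG^{(2)}$ is the image $A(W)$, and $\dim A(W)=rank(A)$ as soon as $W+null(A)=\mathbb{K}^n$, because then $A(\mathbb{K}^n)=A\bigl(W+null(A)\bigr)=A(W)$. So the proof comes down to establishing the identity $W+null(A)=\mathbb{K}^n$.

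The key ingredient is an elementary fact: if $v_1,\dots,v_n$ is a basis of $\mathbb{K}^n$, then the vectors $v_i\odot v_j$ with $1\le i\le j\le n$ span $\mathbb{K}^n$. I would prove this by duality: if a linear functional $x\mapsto\sum_k c_k x_k$ vanishes on every $v_i\odot v_j$, then $v_i^{t}Dv_j=0$ for all $i,j$, where $D=Diag(c_1,\dots,c_n)$; since the $v_i$ form a basis, the bilinear form $(x,y)\mapsto x^{t}Dy$ vanishes identically, forcing $D=0$ and hence $c=0$. Now apply this with $v_i=G_{*i}$, which is a basis because $G$ is invertible: the products with $i<j$ are exactly the columns of $G*G$, hence belong to $null(A)$ by hypothesis, while the products with $i=j$ are exactly the columns of $G^{(2)}$, hence belong to $W$. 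Therefore $\mathbb{K}^n=span\{G_{*i}\odot G_{*j}: i\le j\}\subseteq W+null(A)$, which is the required identity, and the proof is complete.

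The only step that is not routine bookkeeping is the spanning lemma for $\{v_i\odot v_j\}$; everything else follows from the rank formula already proved and the characterization of natural bases via $A(G*G)=0$. As an alternative to the dual argument, one can instead fix $i$ and observe that $\{v_i\odot v_j:j=1,\dots,n\}=\{Diag(v_i)v_j:j\}$ spans the coordinate subspace indexed by $supp(v_i)$, and that these supports cover $\{1,\dots,n\}$ since the $v_i$ span $\mathbb{K}^n$; I would include whichever version reads more cleanly.
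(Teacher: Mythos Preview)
Your proof is correct, but it takes a genuinely different route from the paper's. The paper exploits the symmetry of the situation: since $\mathcal{B}$ and $\mathcal{B}'$ are both natural bases, one can equally well take $\mathcal{B}'$ as the reference basis and write $A=H^{-1}A'H^{(2)}$ for a suitable invertible $H$, obtaining $rank(A)\le rank(A')$ by the same one-line estimate you used for the other inequality. No auxiliary lemma is needed.

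Your argument instead fixes one reference basis and proves $rank(AG^{(2)})=rank(A)$ directly, using the constraint $A(G*G)=0$ in an essential way via the spanning lemma for $\{v_i\odot v_j:i\le j\}$. This is longer but more informative: it exhibits \emph{why} the rank cannot drop (the column space of $G^{(2)}$ together with $null(A)$ fills all of $\mathbb{K}^n$), and the spanning lemma is a clean standalone fact. The paper's symmetry trick is shorter and slicker for this particular statement; your approach would be the natural choice if one needed the stronger conclusion that the column space of $AG^{(2)}$ coincides with that of $A$, rather than merely that the two ranks agree.
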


\begin{proof}
Let $\mathcal{B},\mathcal{B}'$ be two natural basis and $L,L'$ the evolution operators associated with these basis, respectively. Let $A:=M_{\mathcal{B}}(L)$ and $A':=M_{\mathcal{B}'}(L')=G^{-1}AG^{(2)}$. Then,
\[rank(A')=rank(G^{-1}AG^{(2)})=rank(AG^{(2)})\leq rank(A).\]
Exchanging the roles of $A$ and $A'$ it follows that $rank(A')\leq rank(A)$. Therefore $rank(A)=rank(A')$.
\end{proof}

\begin{pro}
$ADiag(\lambda)\in\mathcal{L}_{\mathcal{B}}$ for all $\lambda=(\lambda_i)_{i=1}^n\in\mathbb{C}^n$ with $\lambda_i\neq0$, for all $i=1,\dots,n$.
\end{pro}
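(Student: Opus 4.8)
The plan is to exhibit an explicit invertible matrix $G$ satisfying $A(G*G)=0$ and $AG^{(2)}G^{-1}=A\,Diag(\lambda)$; then, by the definition of $\mathcal{L}_{\mathcal{B}}$, the matrix $A\,Diag(\lambda)$ belongs to it. The natural candidate, motivated by the fact that rescaling the vectors of a natural basis by nonzero scalars always yields a natural basis, is $G=Diag(\lambda)$ itself, which is invertible precisely because every $\lambda_i\neq 0$, so $rank(G)=n$.

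First I would verify the naturality condition $A(G*G)=0$. For the diagonal matrix $G=Diag(\lambda)$ the $i$-th column $G_{*i}$ is supported on the single index $i$, hence for $i<j$ the Hadamard product $G_{*i}\odot G_{*j}$ vanishes identically. Thus $G*G$ is the zero matrix and $A(G*G)=0$ holds trivially. (Equivalently, $\mathcal{B}'=\{\lambda_i e_i:i=1,\dots,n\}$ is again a natural basis, which is clear directly.)

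Next I would carry out the short computation. The Hadamard square of a diagonal matrix is $G^{(2)}=Diag(\lambda_1^2,\dots,\lambda_n^2)$, and its inverse is $G^{-1}=Diag(\lambda_1^{-1},\dots,\lambda_n^{-1})$, so $G^{(2)}G^{-1}=Diag(\lambda_1,\dots,\lambda_n)=Diag(\lambda)$. Therefore $AG^{(2)}G^{-1}=A\,Diag(\lambda)$, and since $G$ satisfies $A(G*G)=0$ and $rank(G)=n$, this shows $A\,Diag(\lambda)\in\mathcal{L}_{\mathcal{B}}$.

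I do not expect any real obstacle: the only step needing a (one-line) justification is that off-diagonal Hadamard products of columns of a diagonal matrix vanish, which makes the naturality condition automatic, and the rest is a direct evaluation of $G^{(2)}G^{-1}$. Accordingly I would keep the proof to these few lines.
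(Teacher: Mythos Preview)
Your proposal is correct and follows essentially the same approach as the paper: choose $G=Diag(\lambda)$, observe that $G*G=0$ so the naturality condition is automatic, and compute $AG^{(2)}G^{-1}=A\,Diag(\lambda^{(2)})\,Diag(\lambda^{(-1)})=A\,Diag(\lambda)$.
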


\begin{proof}
Let $G=Diag(\lambda)$. Then, $G*G$ is the zero matrix and $A(G*G)=0$, so $AG^{(2)}G^{-1}\in\mathcal{L}_\mathcal{B}$. That is to say,
\begin{align*}
    AG^{(2)}G^{-1}&=ADiag\left(\lambda\right)^{(2)}Diag\left(\lambda\right)^{-1}=\\
    &=ADiag\left(\lambda^{(2)}\right)Diag\left(\lambda^{(-1)}\right)=\\
    &=ADiag\left(\lambda^{(2)}\lambda^{(-1)}\right)=ADiag(\lambda)\in\mathcal{L}_\mathcal{B}.
\end{align*}
\end{proof}

\begin{defi}
$\mathcal{L}_{\mathcal{B}}$ is said to be \textup{trivial} if
\[\mathcal{L}_{\mathcal{B}}=\left\{ADiag(\lambda): \lambda=(\lambda_i)_{i=1}^n\in\mathbb{C}^n, \lambda_i\neq0\right\},\]
and \textup{semitrivial} if
\[\mathcal{L}_{\mathcal{B}}\subseteq\left\{ADiag(\lambda): \lambda=(\lambda_i)_{i=1}^n\in\mathbb{C}^n\right\}.\]
\end{defi}

\begin{pro}
Let $E$ be an evolution algebra with natural basis $\mathcal{B}=\{e_i:i=1,\dots,n\}$ and structure matrix $A=(a_{ij})$ satisfying Property (2LI). Then, $\mathcal{L}_{\mathcal{B}}$ is trivial.
\end{pro}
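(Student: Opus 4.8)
The plan is to prove the only non-trivial inclusion, namely $\mathcal{L}_{\mathcal{B}}\subseteq\{A\,Diag(\lambda):\lambda=(\lambda_i)_{i=1}^n,\ \lambda_i\neq0\}$; the reverse inclusion is exactly the previous proposition. So I would start from an arbitrary matrix $G$ with $rank(G)=n$ and $A(G*G)=0$ and aim to show that $G$ must be a generalized permutation (monomial) matrix, i.e. $G=P\,Diag(\mu)$ for some permutation matrix $P$ and some $\mu=(\mu_i)_{i=1}^n$ with every $\mu_i\neq0$. Once this is established, the conclusion becomes a one-line computation.

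To obtain the monomial form I would apply Theorem~\ref{teobcs} to the natural basis $\mathcal{B}'=\{\eta_i=g(e_i):i=1,\dots,n\}$, which is natural precisely because $A(G*G)=0$. The case $n=1$ is immediate, since then $AG^{(2)}G^{-1}=g_{11}A=A\,Diag(g_{11})$ with $g_{11}\neq0$. For $n\geq2$, Property (2LI) forces $e_i^2\neq0$ for all $i$ (a pair $\{e_i^2,e_j^2\}$ containing a zero vector is dependent). Hence, since $supp(\eta_i)\neq\emptyset$ (as $\eta_i\neq0$), the first bullet of Theorem~\ref{teobcs} rules out $\eta_i^2=0$, so $\eta_i^2\neq0$ for every $i$. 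The second bullet then gives $rank(\{e_k^2:k\in supp(\eta_i)\})=1$; if $supp(\eta_i)$ contained two distinct indices $k,l$, the pair $\{e_k^2,e_l^2\}$ would lie in a one-dimensional subspace, hence be linearly dependent, contradicting (2LI). Therefore $|supp(\eta_i)|=1$ for all $i$, say $\eta_i=\mu_i\,e_{\sigma(i)}$ with $\mu_i\neq0$; invertibility of $G$ makes the $e_{\sigma(i)}$ pairwise distinct, so $\sigma\in S_n$, and writing $P$ for its permutation matrix this reads $G=P\,Diag(\mu)$.

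It remains to compute. Since each row of $G=P\,Diag(\mu)$ has a single nonzero entry, $G^{(2)}=P\,Diag(\mu)^{(2)}=P\,Diag(\mu^{(2)})$, and consequently
\[
G^{(2)}G^{-1}=P\,Diag(\mu^{(2)})\,Diag(\mu)^{-1}P^{-1}=P\,Diag(\mu)\,P^{-1},
\]
which is a diagonal matrix $Diag(\nu)$ whose entries are a permutation of the $\mu_i$, in particular all nonzero. Hence $AG^{(2)}G^{-1}=A\,Diag(\nu)$, giving the inclusion $\mathcal{L}_{\mathcal{B}}\subseteq\{A\,Diag(\lambda):\lambda_i\neq0\}$; combined with the previous proposition this yields equality, i.e. $\mathcal{L}_{\mathcal{B}}$ is trivial.

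I expect the support step to be the delicate point: one must dispose of the possibility $\eta_i^2=0$ before invoking the rank-one conclusion, and argue cleanly that (2LI) is incompatible with a support of size $\geq2$. A shorter route would be to invoke Corollary~\ref{corbcs} directly — Property (2LI) makes $E$ non-degenerate with a unique natural basis, so $\mathcal{B}'$ is a rearrangement-and-rescaling of $\mathcal{B}$, which again forces $G=P\,Diag(\mu)$ — after which the same computation finishes the proof; I would mention this as the reason behind the hypothesis, but carrying out the argument through Theorem~\ref{teobcs} keeps it self-contained.
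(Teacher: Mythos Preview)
Your proof is correct and follows essentially the same approach as the paper; in fact, the paper takes precisely the ``shorter route'' you describe at the end, invoking Corollary~\ref{corbcs} to conclude that $G$ is a monomial matrix (written there as $G=Diag(\lambda)P$ rather than your $G=P\,Diag(\mu)$) and then performing the same computation of $AG^{(2)}G^{-1}$. Your more detailed derivation via Theorem~\ref{teobcs} simply reproves the relevant implication of Corollary~\ref{corbcs} in place.
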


\begin{proof}
If $E$ has Property (2LI) then $E$ has a unique natural basis by Corollary \ref{corbcs}, that is $G=Diag(\lambda)P$, with $\lambda_i\neq0$ for all $i=1,\dots,n$ and $P$ a permutation matrix. Then,
\begin{align*}
    AG^{(2)}G^{-1}&=A(Diag(\lambda)P)^{(2)}(Diag(\lambda)P)^{-1}=\\
    &=ADiag(\lambda)^{(2)}PP^{-1}Diag(\lambda)^{-1}=\\
    &=ADiag(\lambda^{(2)})Diag(\lambda^{(-1)})=ADiag(\lambda).
\end{align*}
\end{proof}

The following examples show that if $E$ does not satisfy (2LI) then $L_\mathcal{B}$ may or may not be trivial.

\begin{exa}
Let $E$ be the evolution algebra with natural basis $\mathcal{B}=\{e_1,e_2\}$ and structure matrix
\[A=\begin{pmatrix}
a & 0\\ 
b & 0
\end{pmatrix}\!\!,\]
with $a\neq0$ or $b\neq0$. Then, $null(A)=\{x_1=0\}$ and $G=\begin{pmatrix}
g_1 & g_2\\ 
g_3 & g_4
\end{pmatrix}$ satisfies $A(G*G)=0$ if and only if $g_1g_2=0$. It is easy to see that

\begin{itemize}
    \item If $g_1=0$, then
\begin{align*}
AG^{(2)}G^{-1}&=\begin{pmatrix}
a & 0\\ 
b & 0
\end{pmatrix}\begin{pmatrix}
0 & g_2^2\\ 
g_3^2 & g_4^2
\end{pmatrix}\frac{1}{det(G)}\begin{pmatrix}
g_4 & -g_2\\ 
-g_3 & 0
\end{pmatrix}=\\[5pt]
&=\frac{1}{-g_2g_3}\begin{pmatrix}
-ag_2^2g_3 & 0\\ 
-bg_2^2g_3 & 0
\end{pmatrix}=\begin{pmatrix}
ag_2 & 0\\ 
bg_2 & 0
\end{pmatrix}\!\!.
\end{align*}
    \item If $g_2=0$, then
\begin{align*}
AG^{(2)}G^{-1}&=\begin{pmatrix}
a & 0\\ 
b & 0
\end{pmatrix}\begin{pmatrix}
g_1^2 & 0\\ 
g_3^2 & g_4^2
\end{pmatrix}\frac{1}{det(G)}\begin{pmatrix}
g_4 & 0\\ 
-g_3 & g_1
\end{pmatrix}=\\[5pt]
&=\frac{1}{g_1g_4}\begin{pmatrix}
ag_1^2g_4 & 0\\ 
bg_1^2g_4 & 0
\end{pmatrix}=\begin{pmatrix}
ag_4 & 0\\ 
bg_4 & 0
\end{pmatrix}\!\!.
\end{align*}
\end{itemize}
Therefore $\mathcal{L}_{\mathcal{B}}$ is trivial.
\end{exa}

\begin{exa}
Let $E$ be the evolution algebra with natural basis $\mathcal{B}=\{e_1,e_2\}$ and structure matrix
\[A=\begin{pmatrix}
a & a\\ 
b & b
\end{pmatrix}\!\!,\]
with $a\neq0$ or $b\neq0$. Then, $G=\begin{pmatrix}
1 & -1\\ 
1 & 1
\end{pmatrix}$ satisfies $A(G*G)=0$ and
\[AG^{(2)}G^{-1}=\begin{pmatrix}
a & a\\ 
b & b
\end{pmatrix}\begin{pmatrix}
1 & 1\\ 
1 & 1
\end{pmatrix}\frac{1}{det(G)}\begin{pmatrix}
1 & 1\\ 
-1 & 1
\end{pmatrix}=\begin{pmatrix}
0 & 2a\\ 
0 & 2b
\end{pmatrix}\!\!.\]
Note that this matrix is not of the form $ADiag(\lambda_1,\lambda_2)$ with $\lambda_i\neq0$, so $\mathcal{L}_\mathcal{B}$ is not trivial. Nevertheless, $AG^{(2)}G^{-1}=ADiag(0,2)$, so $\mathcal{L}_\mathcal{B}$ could be semitrivial.
\end{exa}

\begin{pro}
Let $E$ be an evolution algebra with natural basis $\mathcal{B}=\{e_i:i=1,\dots,n\}$. Then, $\mathcal{L}_{\mathcal{B}}$ is semitrivial.
\end{pro}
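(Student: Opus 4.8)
The plan is to take an arbitrary element $C=AG^{(2)}G^{-1}$ of $\mathcal{L}_{\mathcal{B}}$, where $A(G*G)=0$ and $rank(G)=n$, and to exhibit $\lambda=(\lambda_k)_{k=1}^n\in\mathbb{C}^n$ (zero entries allowed) with $C=ADiag(\lambda)$; this is precisely what semitriviality requires. By the previous proposition the vectors $\eta_j=\sum_i g_{ij}e_i$ form a natural basis $\mathcal{B}'$, and $C$ is the matrix with respect to $\mathcal{B}$ of the evolution operator $L'$ associated with $\mathcal{B}'$. Note that here $G$ need not represent an automorphism, so the argument must rest only on $\mathcal{B}'$ being a natural basis, which $A(G*G)=0$ guarantees.

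First I would record two columnwise identities. Writing $H=G^{-1}$, a direct computation using Proposition \ref{prop} gives that the $k$-th column of $C=AG^{(2)}H$ is
\[
C_{*k}=\sum_{j=1}^n h_{jk}\,A\bigl(G_{*j}\bigr)^{(2)}=\sum_{j=1}^n h_{jk}\,(\eta_j^2)_{\mathcal{B}}.
\]
On the other hand, since $GH=I$ we have $e_k=\sum_j h_{jk}\eta_j$, and because $\mathcal{B}'$ is a natural basis all cross terms $\eta_j\cdot\eta_l$ ($j\neq l$) vanish upon squaring, so $e_k^2=\sum_j h_{jk}^2\,\eta_j^2$, that is, $A_{*k}=\sum_j h_{jk}^2\,(\eta_j^2)_{\mathcal{B}}$. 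Thus both $C_{*k}$ and $A_{*k}$ are linear combinations of the vectors $\{(\eta_j^2)_{\mathcal{B}}:j\in S_k\}$, where $S_k=\{j:h_{jk}\neq0\}$ is the support of $e_k$ in the basis $\mathcal{B}'$.

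The key step is then to apply Theorem \ref{teobcs} with the \emph{natural} basis taken to be $\mathcal{B}'$ and the \emph{other} basis taken to be $\mathcal{B}$, using the expansion $e_k=\sum_{j\in S_k}h_{jk}\eta_j$. If $e_k^2=0$, then $A_{*k}=0$ and the theorem forces $\eta_j^2=0$ for every $j\in S_k$, hence $C_{*k}=0$; take $\lambda_k=0$. If $e_k^2\neq0$, the theorem gives that $\{\eta_j^2:j\in S_k\}$ has rank $1$, so there is a single nonzero vector $v$ and scalars $\mu_j$ with $(\eta_j^2)_{\mathcal{B}}=\mu_j v$ for all $j\in S_k$. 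Then $C_{*k}=\bigl(\sum_{j}h_{jk}\mu_j\bigr)v$ and $A_{*k}=\bigl(\sum_{j}h_{jk}^2\mu_j\bigr)v\neq0$, so $\sum_j h_{jk}^2\mu_j\neq0$ and $C_{*k}=\lambda_k A_{*k}$ with $\lambda_k=\bigl(\sum_j h_{jk}\mu_j\bigr)\big/\bigl(\sum_j h_{jk}^2\mu_j\bigr)$. Collecting the $\lambda_k$ yields $C=ADiag(\lambda)$, as desired.

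I expect the only real obstacle to be the conceptual move of invoking Theorem \ref{teobcs} in the reverse direction, that is, treating $\mathcal{B}'$ as the natural basis and re-expanding each $e_k$ in it; once that is set up, the two identities $C_{*k}=\sum_j h_{jk}(\eta_j^2)_{\mathcal{B}}$ and $A_{*k}=\sum_j h_{jk}^2(\eta_j^2)_{\mathcal{B}}$ do all the work, and the remaining verification is routine linear algebra.
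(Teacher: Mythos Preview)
Your proposal is correct and is essentially the paper's own argument, just phrased in column-of-matrix language rather than in terms of $L'(e_j)$ and $L(e_j)$: the paper also expands $e_j$ in the natural basis $\mathcal{B}'$, applies Theorem~\ref{teobcs} with $\mathcal{B}'$ playing the role of the natural basis, and in the two cases $e_j^2=0$ and $e_j^2\neq0$ obtains exactly your formulas $L'(e_j)=\bigl(\sum g'_{i_kj}\alpha_{i_k}\bigr)v$ and $L(e_j)=\bigl(\sum g'^{\,2}_{i_kj}\alpha_{i_k}\bigr)v$, hence $L'(e_j)=\lambda_j L(e_j)$. Your $h_{jk}$ are the paper's $g'_{i_kj}$, and your ``reverse direction'' observation about Theorem~\ref{teobcs} is precisely the key step the paper uses.
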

\begin{proof}
Let $\mathcal{B}'=\{\eta_i:i=1,\dots,n\}$ be another natural basis and let us write $e_j=\sum_{k=1}^{r_j} g_{i_kj}'\eta_{i_k}$, with $g_{i_kj}'\neq0$. We distinguish two cases
\begin{itemize}
    \item If $e_j^2=0$, then we deduce that $\eta_{i_k}^2=0$ by Theorem \ref{teobcs}. Therefore
    \begin{align*}
        L'(e_j)&=L'\left(\sum_{k=1}^{r_j} g_{i_kj}'\eta_{i_k}\right)=\sum_{k=1}^{r_j} g_{i_kj}'L'(\eta_{i_k})=\sum_{k=1}^{r_j} g_{i_kj}'\eta_{i_k}^2=0,\\
        L(e_j)&=e_j^2=0.
    \end{align*}
    Hence $L'(e_j)=L(e_j)$.
    \item If $e_j^2\neq0$, then we deduce that $\eta_{i_k}^2=\alpha_{i_k}v$ for some $v\in E$ by Theorem \ref{teobcs}. Therefore
    \begin{align*}
        L'(e_j)&=\sum_{k=1}^{r_j} g_{i_kj}'\eta_{i_k}^2=\left(\sum_{k=1}^{r_j} g_{i_kj}'\alpha_{i_k}\right)v,\\
        L(e_j)&=e_j^2=\left(\sum_{k=1}^{r_j} g_{i_kj}'\eta_{i_k}\right)^2=\sum_{k=1}^{r_j}g_{i_kj}'^2\eta_{i_k}^2=\left(\sum_{k=1}^{r_j} g_{i_kj}'^2\alpha_{i_k}\right)v\neq 0.
    \end{align*}
    Hence $L'(e_j)=\lambda_j L(e_j)$ with $\displaystyle\lambda_j=\frac{\sum_{k=1}^{r_j} g_{i_kj}'\alpha_{i_k}}{\sum_{k=1}^{r_j} g_{i_kj}'^2\alpha_{i_k}}$.
\end{itemize}
Therefore, there exists $\lambda_j$ such that $L'(e_j)=\lambda_j L(e_j)$, for all $j=1,\dots,n$. From this we deduce that $L_\mathcal{B}$ is semitrivial, since $M_\mathcal{B}(L')=M_\mathcal{B}(L)Diag(\lambda)=ADiag(\lambda)$, with $\lambda=(\lambda_j)_{j=1}^n$.
\end{proof}

\begin{rmk}
In the previous proof, we can not claim that $E$ is trivial, since it is necessary that $\lambda_j\neq0$, that is, $\sum_{k=1}^{r_j} g_{i_kj}'\alpha_{i_k}\neq0$, for all $j=1,\dots,n$ with $e_j^2\neq0$.
\end{rmk}

\begin{pro}
Let $E$ be an evolution algebra with natural basis $\mathcal{B}=\{e_i: i=1,\dots,n\}$. Let suppose there exist $r,s$ and $\alpha\neq0$ such that $e_r^2=\alpha e_s^2$ and $e_s^2\neq 0$. Then, $\mathcal{L}_\mathcal{B}$ is not trivial.
\end{pro}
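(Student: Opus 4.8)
The plan is to produce a single element of $\mathcal{L}_\mathcal{B}$ that is not of the form $ADiag(\lambda)$ with all $\lambda_i\neq 0$; this already shows $\mathcal{L}_\mathcal{B}$ does not coincide with that set, hence is not trivial. The element I would use is $M_\mathcal{B}(L')$ for a new natural basis $\mathcal{B}'$ that equals $\mathcal{B}$ outside the indices $r,s$ and replaces $e_r,e_s$ by two linear combinations of themselves, the replacement arranged so that the evolution operator $L'$ associated with $\mathcal{B}'$ satisfies $L'(e_s)=0$. Once this holds, the $s$-th column of $M_\mathcal{B}(L')$ is zero, whereas the $s$-th column of $ADiag(\lambda)$ is $\lambda_s(e_s^2)_\mathcal{B}$, which is nonzero whenever $\lambda_s\neq 0$ because $e_s^2\neq 0$; hence $M_\mathcal{B}(L')$ is not of the trivial form.

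For the construction I would fix scalars $a,c\in\mathbb{C}$ and set $\eta_i=e_i$ for $i\notin\{r,s\}$, $\eta_r=ae_r+e_s$ and $\eta_s=ce_r-e_s$. Every product $\eta_i\cdot\eta_j$ with $i\neq j$ and $\{i,j\}\neq\{r,s\}$ reduces to a combination of products $e_k\cdot e_l$ with $k\neq l$, hence vanishes, while $\eta_r\cdot\eta_s=ac\,e_r^2-e_s^2=(ac\,\alpha-1)e_s^2$. Since $e_s^2\neq 0$, this means $\mathcal{B}'=\{\eta_i\}$ is a natural basis precisely when $ac\,\alpha=1$; and $\mathcal{B}'$ is a basis of $E$ precisely when $\eta_r,\eta_s$ are linearly independent, that is, when $a+c\neq 0$. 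Both conditions can be met over $\mathbb{C}$: take any $a\neq 0$ with $a^2\neq -1/\alpha$ and put $c=1/(a\alpha)$. Then $\eta_r^2=(a^2\alpha+1)e_s^2$, $\eta_s^2=(c^2\alpha+1)e_s^2$, and inverting the $2\times2$ change of coordinates gives $e_s=(c\,\eta_r-a\,\eta_s)/(a+c)$, so that
\[L'(e_s)=\frac{c\,\eta_r^2-a\,\eta_s^2}{a+c}=\frac{c(a^2\alpha+1)-a(c^2\alpha+1)}{a+c}\,e_s^2=\frac{(a-c)(ac\,\alpha-1)}{a+c}\,e_s^2=0.\]
Finally, the Proposition preceding the definition of $\mathcal{L}$ gives $M_\mathcal{B}(L')=AG^{(2)}G^{-1}$ with $G=M_\mathcal{B}(g)$, $g(e_i)=\eta_i$; since $A(G*G)=0$ ($\mathcal{B}'$ being natural) and $rank(G)=n$ ($\mathcal{B}'$ being a basis), $M_\mathcal{B}(L')\in\mathcal{L}_\mathcal{B}$, and by the column argument it is not of the trivial form. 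Therefore $\mathcal{L}_\mathcal{B}$ is not trivial.

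The calculations here are routine; the real content is the design of $\mathcal{B}'$, which must at once be (i) a natural basis, (ii) a genuine basis of $E$, and (iii) a basis for which $e_s\in\ker L'$. Requirement (i) forces $ac\,\alpha=1$, requirement (ii) forces $a+c\neq 0$, and requirement (iii) is exactly what compels $\eta_r$ and $\eta_s$ to carry $e_s$-coefficients $+1$ and $-1$; the residual freedom in $a$ is then used to satisfy (i) and (ii) jointly, which is possible since $\mathbb{C}$ is infinite. Both hypotheses are genuinely needed: $\alpha\neq 0$ makes $e_r^2$ honestly proportional to $e_s^2$ and makes $c=1/(a\alpha)$ meaningful, while $e_s^2\neq 0$ is precisely what keeps the $s$-th column of $ADiag(\lambda)$ from vanishing when $\lambda_s\neq 0$ --- the heart of the non-triviality.
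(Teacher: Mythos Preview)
Your proof is correct and follows essentially the same approach as the paper: modify $\mathcal{B}$ only at positions $r,s$ so that the new evolution operator kills $e_s$, then observe that the $s$-th column of $A\,Diag(\lambda)$ cannot vanish when $\lambda_s\neq 0$. The only difference is cosmetic: the paper picks the symmetric choice $\eta_r=e_r+\beta e_s$, $\eta_s=e_r-\beta e_s$ with $\beta^2=\alpha$, which gives $\eta_r^2=\eta_s^2$ immediately and so shortens the verification that $L'(e_s)=0$; your one-parameter family $(a,c)$ with $ac\alpha=1$ contains this as the special case $a=c=1/\beta$.
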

\begin{proof}
Let $\beta$ be such that $\beta^2=\alpha$ and consider the basis $\mathcal{B}'=\{\eta_i:i=1,\dots,n\}$ with
\[\eta_i=\begin{cases}
e_i & \text{ if } i\neq r,s, \\
e_r+\beta e_s & \text{ if } i=r, \\
e_r-\beta e_s & \text{ if } i=s.
\end{cases}\]
Then, $\mathcal{B}'$ is a natural basis due to
\[\eta_r\cdot\eta_s=(e_r+\beta e_s)\cdot(e_r-\beta e_s)=e_r^2-\beta^2e_s^2=e_r^2-\alpha e_s^2=0.\]
Furthermore, since $\eta_r^2=\eta_s^2=(1+\alpha)e_s^2$, we have that
\[L'(e_s)=L'\left(\frac{1}{2\beta}(\eta_r-\eta_s)\right)=\frac{1}{2\beta}(\eta_r^2-\eta_s^2)=0.\]
Since $e_s^2\neq0$, then $\mathcal{L}_\mathcal{B}$ is not trivial.
\end{proof}

\begin{cor}
Let $E$ be a non-degenerate evolution algebra with natural basis $\mathcal{B}=\{e_i: i=1,\dots,n\}$ and structure matrix $A$ not satisfying Property (2LI). Then, $\mathcal{L}_\mathcal{B}$ is not trivial.
\end{cor}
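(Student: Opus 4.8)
The plan is to derive the statement as an immediate consequence of the preceding proposition, using non-degeneracy to upgrade ``linearly dependent'' to ``proportional''.

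First I would unfold the hypothesis that $A$ does not satisfy Property (2LI): by definition there are two \emph{different} basis vectors, say $e_r$ and $e_s$ with $r\neq s$, for which $\{e_r^2,e_s^2\}$ is linearly dependent. Next, since $E$ is non-degenerate we have $e_r^2\neq 0$ and $e_s^2\neq 0$, and a pair of nonzero vectors that is linearly dependent must be proportional; hence there is a scalar $\alpha$ with $e_r^2=\alpha e_s^2$, and necessarily $\alpha\neq 0$, for otherwise $e_r^2=0$, contradicting non-degeneracy.

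At this point the hypotheses of the preceding proposition --- the existence of indices $r,s$ (here distinct) and a scalar $\alpha\neq 0$ with $e_r^2=\alpha e_s^2$ and $e_s^2\neq 0$ --- are all met, and that proposition yields directly that $\mathcal{L}_\mathcal{B}$ is not trivial.

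I do not anticipate a genuine obstacle: the substantive construction (the natural basis $\eta_r=e_r+\beta e_s$, $\eta_s=e_r-\beta e_s$, on which the associated evolution operator kills $e_s$) has already been carried out in the preceding proposition. The only point worth a line of verification is that the two witnessing vectors are genuinely distinct, so that this basis change is legitimate; this is guaranteed by the phrase ``two different vectors'' in the definition of Property (2LI). One could alternatively route the argument through Corollary \ref{corbcs} to emphasise that non-degeneracy is exactly what makes the reduction clean, but the direct path above is the shortest.
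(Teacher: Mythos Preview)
Your proposal is correct and matches the paper's approach: the paper states this result as a corollary with no explicit proof, and your argument---reducing to the preceding proposition via non-degeneracy to obtain $e_r^2=\alpha e_s^2$ with $\alpha\neq 0$ and $e_s^2\neq 0$---is exactly the intended derivation. Your remark that $r\neq s$ is needed (and is supplied by the phrase ``two different vectors'' in the definition of Property~(2LI)) is a useful clarification, since the preceding proposition's statement does not make this explicit even though its proof requires it.
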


\begin{cor}
Let $E$ be a non-degenerate evolution algebra with natural basis $\mathcal{B}$. Then the following assertions are equivalent
\begin{enumerate}
    \item $E$ has a unique natural basis.
    \item $E$ has Property (2LI).
    \item $\mathcal{L}_\mathcal{B}$ is trivial.
\end{enumerate}
As a consequence, the fact of $\mathcal{L}_\mathcal{B}$ being trivial is an intrinsic property of the algebra, that is, it does not depend on the natural basis $\mathcal{B}$.
\end{cor}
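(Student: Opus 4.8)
The plan is to obtain the three equivalences by chaining together results already proved above, organised as $(1)\Leftrightarrow(2)$ and $(2)\Leftrightarrow(3)$. The equivalence $(1)\Leftrightarrow(2)$ is precisely Corollary \ref{corbcs}, which applies because $E$ is assumed non-degenerate; nothing new is needed there. So the real content is the link between $(2)$ and $(3)$.

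For $(2)\Rightarrow(3)$ I would simply invoke the proposition stating that an evolution algebra whose structure matrix satisfies Property (2LI) has trivial $\mathcal{L}_\mathcal{B}$; that proposition is stated for arbitrary evolution algebras, so it covers the non-degenerate case in particular. For $(3)\Rightarrow(2)$ I would argue by contraposition: if $E$ does not satisfy Property (2LI), then, since $E$ is non-degenerate, the corollary immediately preceding this one (the one asserting that a non-degenerate evolution algebra whose structure matrix fails Property (2LI) has non-trivial $\mathcal{L}_\mathcal{B}$) gives that $\mathcal{L}_\mathcal{B}$ is not trivial. Hence $\lnot(2)\Rightarrow\lnot(3)$, i.e. $(3)\Rightarrow(2)$, closing the cycle.

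For the concluding ``as a consequence'' sentence, I would observe that assertion $(1)$---that $E$ has a unique natural basis---is formulated without reference to any distinguished natural basis, so it is manifestly an intrinsic property of $E$. Since $(1)\Leftrightarrow(3)$, triviality of $\mathcal{L}_\mathcal{B}$ is equivalent to an intrinsic property and is therefore itself intrinsic. Concretely: if $\mathcal{L}_\mathcal{B}$ is trivial for some natural basis $\mathcal{B}$, then by $(3)\Rightarrow(1)$ the algebra $E$ has a unique natural basis, hence by $(1)\Rightarrow(2)$ applied to an arbitrary natural basis $\mathcal{B}'$ the algebra $E$ has Property (2LI) with respect to $\mathcal{B}'$, and so by $(2)\Rightarrow(3)$ the set $\mathcal{L}_{\mathcal{B}'}$ is trivial for every natural basis $\mathcal{B}'$.

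I do not anticipate a genuine obstacle, since the statement is an assembly of earlier results; the only points that require care are that the non-degeneracy hypothesis is essential---it is exactly what makes Corollary \ref{corbcs} and the preceding corollary available, and without it the three assertions generically all fail---and that, although Property (2LI) and ``having a unique natural basis'' are phrased relative to a fixed natural basis, Corollary \ref{corbcs} identifies the former with the visibly basis-free latter, which is what legitimises the final intrinsic-ness claim.
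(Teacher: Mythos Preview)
Your proposal is correct and is precisely the intended argument: the paper states this result as an immediate corollary with no written proof, and the chain you describe---$(1)\Leftrightarrow(2)$ from Corollary~\ref{corbcs}, $(2)\Rightarrow(3)$ from the (2LI)$\Rightarrow$trivial proposition, and $\lnot(2)\Rightarrow\lnot(3)$ from the preceding corollary on non-degenerate algebras failing (2LI)---is exactly what the sequence of results is set up to yield. Your remark on intrinsicness via the basis-free formulation of $(1)$ is also the intended reading.
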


\section{Conclusions}

In this work we have related evolution operators with homomorphisms, in addition to introducing new concepts such as the set of evolution operators of an evolution algebra, studying whether it is trivial or not. To the best of our knowledge, there are no other papers in the literature studying the relations between the different evolution operators, and our work can serve as a starting point for research on this topic, of which some questions remain open. For example, it can be studied if the fact of $\mathcal{L}_\mathcal{B}$ being trivial or semitrivial depends on the natural basis $\mathcal{B}$ in the degenerate case.

 \bibliographystyle{plain}

 \bigskip

\address{\noindent Dpto. de Geometr\'{\i}a y Topolog\'{\i}a, Universidad de Sevilla, C/ Tarfia S/N, 
            Sevilla, 41012, Spain.\\}
\email{desamfer@us.es, victor.manuel.gomez.sousa@gmail.com, jnvalde@us.es}

\end{document}